\newtheorem{thm}{Theorem}[section]
\numberwithin{equation}{section}
\newtheorem{rmk}{Remark}[section]
\newtheorem{prop}{Proposition}[section]
\newtheorem{lm}{Lemma}[section]
\def\@setauthors{%
	\begingroup
	\def\thanks{\protect\thanks@warning}%
	\trivlist
	\centering\footnotesize \@topsep30\p@\relax
	\advance\@topsep by -\baselineskip
	\item\relax
	\author@andify\authors
	\def\\{\protect\linebreak}%
	%  \MakeUppercase{\authors}%
	\authors%
	\ifx\@empty\contribs
	\else
	,\penalty-3 \space \@setcontribs
	\@closetoccontribs
	\fi
	\endtrivlist
	\endgroup
}
\date{}
\begin{document}
	\title{Stackelberg-Nash null controllability for stochastic parabolic equations}
	\author{{\large Omar Oukdach\,$^1$, Said Boulite\,$^2$,  Abdellatif Elgrou\,$^3$, and  Lahcen Maniar\,$^{3,4}$}}
	\begin{abstract}
		We study a hierarchical control problem  for  stochastic parabolic equations involving gradient terms. We employ the Stackelberg-Nash strategy with two leaders and two followers. The leaders are responsible for selecting the policy targeting null controllability, while the followers solve  a bi-objective optimal control problem which consists of maintaining the solution process close to prefixed targets. Once the Nash equilibrium is determined, the problem reduces to achieving null controllability of a  coupled forward-backward stochastic system. To solve this problem, via Carleman estimates, we establish a suitable observability inequality. Subsequently, we achieve the desired controllability result.
	\end{abstract}
	\keywords{Null controllabilty, Stackelberg-Nash strategies, Carleman estimates, Stochastic parabolic equations}
	%\dedicatory{\large Dedicated to the memory of Professor Hammadi Bouslous}
	\maketitle
	
	\footnotetext[1]{\author{Moulay Ismaïl University of Meknes, FST Errachidia,  MSISI Laboratory, AM2CSI Group, Boutalamine, Errachidia, Morocco}. E-mail: \href{omar.oukdach@gmail.com}{\texttt{omar.oukdach@gmail.com}}}
	\footnotetext[2]{Cadi Ayyad University, National School of Applied Sciences, LMDP, UMMISCO (IRD-UPMC), Marrakesh, Morocco. E-mail: \href{s.boulite@uca.ma}{\texttt{s.boulite@uca.ma}}}
	\footnotetext[3]{Cadi Ayyad University, Faculty of Sciences Semlalia, LMDP, UMMISCO (IRD-UPMC), Marrakesh, Morocco. \\E-mail: \href{abdoelgrou@gmail.com}{\texttt{abdoelgrou@gmail.com}}, \href{maniar@uca.ma}{\texttt{maniar@uca.ma}}}
	\footnotetext[4]{University Mohammed VI  Polytechnic, Vanguard Center, Benguerir, Morocco. E-mail: \href{Lahcen.Maniar@um6p.ma}{\texttt{Lahcen.Maniar@um6p.ma}}}

	\section{Introduction and the main result}
	In this paper, we study a hierarchical bi-objective control problem for linear stochastic heat equations in a bounded domain. Let $T>0$, $G\subset\mathbb{R}^N$ ($N\geq1$) be an open bounded domain with a smooth boundary $\Gamma$ and let the sets $\mathcal{O}$, $\mathcal{O}_1$, $\mathcal{O}_2$, $\mathcal{O}_{1,d}$ and $\mathcal{O}_{2,d}$ be nonempty open subsets of $G$. We denote by $\mathbbm{1}_{\mathcal{E}}$ the characteristic function of a subset $\mathcal{E}\subset G$. Throughout this paper, we use $C$ for a positive constant that may vary from one place to another. The norm and inner product of a given Hilbert space $H$ will be respectively denoted by $|\cdot|_H$ and $\langle\cdot,\cdot\rangle_H$. Set
	$$Q=(0,T)\times G, \,\,\quad \Sigma=(0,T)\times\Gamma\,\,\quad  \text{and}\,\, \quad Q_0=(0,T)\times \mathcal{O}.$$
	
	Let $(\Omega,\mathcal{F},\{\mathcal{F}_t\}_{t\in[0,T]},\mathbb{P})$ be a fixed complete filtered probability space on which a one-dimensional standard Brownian motion $W(\cdot)$ is defined such that $\{\mathcal{F}_t\}_{t\in[0,T]}$ is the natural filtration generated by $W(\cdot)$ and augmented by all the $\mathbb{P}$-null sets in $\mathcal{F}$. For a Banach space $\mathcal{X}$, we denote by $C([0,T];\mathcal{X})$ the Banach space of all $\mathcal{X}$-valued continuous functions defined on $[0,T]$; and $L^2_{\mathcal{F}_t}(\Omega;\mathcal{X})$ denotes the Banach space of all $\mathcal{X}$-valued $\mathcal{F}_t$-measurable random variables $X$ such that $\mathbb{E}\big(\vert X\vert_\mathcal{X}^2\big)<\infty$, with the canonical norm; and $L^2_\mathcal{F}(0,T;\mathcal{X})$ indicates the Banach space consisting of all $\mathcal{X}$-valued $\{\mathcal{F}_t\}_{t\in[0,T]}$-adapted processes $X(\cdot)$ such that $\mathbb{E}\big(\vert X(\cdot)\vert^2_{L^2(0,T;\mathcal{X})}\big)<\infty$, with the canonical norm; and $L^\infty_\mathcal{F}(0,T;\mathcal{X})$ is the Banach space consisting of all $\mathcal{X}$-valued $\{\mathcal{F}_t\}_{t\in[0,T]}$-adapted essentially bounded processes, with its norm denoted simply by $|\cdot|_\infty$; and $L^2_\mathcal{F}(\Omega;C([0,T];\mathcal{X}))$ defines the Banach space consisting of all $\mathcal{X}$-valued $\{\mathcal{F}_t\}_{t\in[0,T]}$-adapted continuous processes $X(\cdot)$ such that $\mathbb{E}\left[\displaystyle\max_{t\in[0,T]}\vert X(t)\vert^2_\mathcal{X}\right]<\infty$, with the canonical norm. In what follows, we adopt the following notations
	$$\mathcal{H}_i=L^2_\mathcal{F}(0,T;L^2(\mathcal{O}_i)),\,\quad \mathcal{H}_{i,d}=L^2_\mathcal{F}(0,T;L^2(\mathcal{O}_{i,d}))\quad\,\textnormal{for}\;\;\;i=1,2,$$
	and
	$$
	\mathcal{U}=L^2_\mathcal{F}(0,T;L^2(\mathcal{O}))\times L^2_\mathcal{F}(0,T;L^2(G)),\,\quad\mathcal{H}=\mathcal{H}_1\times\mathcal{H}_2.
	$$
	We consider the following stochastic parabolic equation 
	\begin{equation}\label{eqq1.1}
		\begin{cases}
			\begin{array}{lll}
				dy - \Delta y \,dt \;\,=&\left[a_1y+B_1\cdot\nabla y+u_1\mathbbm{1}_{\mathcal{O}}+v_1\mathbbm{1}_{\mathcal{O}_1}+v_2\mathbbm{1}_{\mathcal{O}_2}\right] \,dt \\
				&+\left[a_2y+B_2\cdot\nabla y +u_2\right]\,dW(t)&\textnormal{in}\,\,Q,\\
				y=0 &&\textnormal{on}\,\,\Sigma,\\
				y(0)=y_0 &&\textnormal{in}\,\,G,
			\end{array}
		\end{cases}
	\end{equation}
	where $a_1,a_2\in L_\mathcal{F}^\infty(0,T;L^\infty(G))$, $B_1, B_2\in L_\mathcal{F}^\infty(0,T;L^\infty(G;\mathbb{R}^N))$, $y_0\in L^2_{\mathcal{F}_0}(\Omega;L^2(G))$ is the initial state. The leaders and followers are constituted respectively of
	$(u_1,u_2) \in \mathcal{U}$ and $(v_1,v_2) \in \mathcal{H}$. In equation \eqref{eqq1.1}, the set $\mathcal{O}$ represents the primary control domain, while $\mathcal{O}_1$ and $\mathcal{O}_2$ denote secondary control domains, all of which are assumed to be small.  For simplicity, we use only four controls (two leaders and two followers), but very similar considerations hold with more than two followers. The case of a single leader is of great interest; however, this task is far from resolved. See Section \ref{section5} for more explanations.
	
	From \cite{krylov}, it is well-known that \eqref{eqq1.1} is well-posed i.e., for any 
	$y_0\in L^2_{\mathcal{F}_0}(\Omega;L^2(G))$, $(u_1,u_2) \in \mathcal{U}$ and $(v_1,v_2)\in \mathcal{H}$, there exists a unique weak solution 
	$$y\in L^2_\mathcal{F}(\Omega;C([0,T];L^2(G)))\bigcap L^2_\mathcal{F}(0,T;H^1_0(G)).$$
	Moreover, there exists a constant $C>0$ so that
	\begin{align*}\vert y\vert_{L^2_\mathcal{F}(\Omega;C([0,T];L^2(G)))}+\vert y\vert_{L^2_\mathcal{F}(0,T;H^1_0(G))}\leq C\,\left(\vert y_0\vert_{L^2_{\mathcal{F}_0}(\Omega;L^2(G))}+\vert(u_1,u_2)\vert_\mathcal{U}+\vert(v_1,v_2)\vert_\mathcal{H}\right).
	\end{align*}
	
	System \eqref{eqq1.1}, without controls, describes various diffusion phenomena, such as thermal processes. These systems, subject to stochastic disturbances, also incorporate small independent changes during the heat process. For further details, see, for instance, \cite[Chapter 5]{lu2021mathematical} and the references therein.
	\begin{rmk}
		\begin{enumerate}[1.]
			\item Only for the simplicity of the presentation, we have worked with the Laplacian operator in \eqref{eqq1.1}. As in \cite{Preprintelgrou23}, it may be replaced by the following general self-adjoint second-order operator
			\begin{align*}
				L(t)y=\displaystyle\sum_{i,j=1}^N \frac{\partial}{\partial x_i}\left(a_{ij}(t,x)\frac{\partial y}{\partial x_j}\right),
			\end{align*}
			where $a_{ij}:\Omega\times Q\rightarrow\mathbb{R}$ satisfy the following assumptions:
			\begin{enumerate}
				\item $a_{ij}\in L^\infty_\mathcal{F}(\Omega;C^1([0,T];W^{2,\infty}(G)))$ and $a_{ij}=a_{ji}$, for any $1\leq i,j\leq N$.
				\item  There exists a constant $c_0>0$ such that 
				$$\sum_{i,j=1}^N a_{ij}\xi_i\xi_j\geq c_0|\xi|^2\qquad\textnormal{for any}\quad (\omega,t,x,\xi)\in \Omega\times Q\times\mathbb{R}^N.$$
			\end{enumerate}
			It would be also interesting to consider only $W^{1,\infty}$-space regularity of coefficients $a_{ij}$ as in the deterministic case.
			\item As we mentioned above, using only one leader $u_1$ is an open problem. It also seems that considering the diffusion term
			``$\left[a_2y+B_2\cdot\nabla y +\mathbbm{1}_{\mathcal{O}'} u_2\right]\,dW(t)$'' where $\mathcal{O}'$ is another nonempty open subset of $G$, is also unresolved problem; we refer to \cite{Preprintelgrou23,tang2009null} for more explanations.
		\end{enumerate}
	\end{rmk}
	
	This paper deals with Stackelberg-Nash controllability for linear stochastic parabolic equations. To the best of our knowledge, in contrast to the deterministic setting, the present paper is the first to consider hierarchical control problems for stochastic parabolic equations with Dirichlet boundary conditions. Let us now formulate the problem under consideration: For fixed $y_{i,d}\in\mathcal{H}_{i,d}$ ($i=1,2$) two target functions, we consider the following secondary functionals
	\begin{equation*}
		J_i(u_1,u_2;v_1,v_2)=\displaystyle\frac{\alpha_i}{2}\mathbb{E}\iint_{ (0,T)\times \mathcal{O}_{i,d}} |y-y_{i,d}|^2 \,dx\, dt + \displaystyle\frac{\beta_i}{2}\mathbb{E}\iint_{(0,T)\times \mathcal{O}_i} |v_i|^2 \,dx\, dt,\quad i=1,2,
	\end{equation*}
	and the main cost functional
	\begin{equation*}
		J(u_1,u_2)=\displaystyle\frac{1}{2}\mathbb{E}\int_{Q} \left(|\mathbbm{1}_{\mathcal{O}}\,u_1|^2+|u_2|^2\right) \,dx\, dt,
	\end{equation*}
	where $\alpha_i$, $\beta_i$  are positive constants and $y=y(y_0,u_1,u_2,v_1,v_2)$ is the solution of \eqref{eqq1.1}. For a fixed $(u_1,u_2)\in \mathcal{U}$, the pair $(v^{\star}_1,v^{\star}_2)\in\mathcal{H}$ is called a Nash equilibrium  for $(J_1,J_2)$ associated to $(u_1,u_2)$  if 
	\begin{equation*}
		J_1(u_1,u_2;  v^{\star}_1,v^{\star}_2)= \min\limits_{v\in  \mathcal{H}_1}J_1(u_1,u_2; v,v^{\star}_2)\,\,\;\;\textnormal{and}\;\;\,\,\,J_2(u_1,u_2; v^{\star}_1,v^{\star}_2)= \min\limits_{v\in  \mathcal{H}_2}J_2(u_1,u_2;  v^{\star}_1,v). 
	\end{equation*}
	Since the functionals $J_1$ and  $J_2$  are differentiable and  convex, then the pair $(v^{\star}_1,v^{\star}_2)\in\mathcal{H}$ is a Nash equilibrium for $(J_1,J_2)$   if and only if
	\begin{equation}\label{NE7}
		J_1'(u_1,u_2;v^{\star}_1,v^{\star}_2)(v,0)=0\qquad\textnormal{for any}\quad v\in  \mathcal{H}_1, 
	\end{equation}
	and
	\begin{equation}\label{NE72}
		J_2'(u_1,u_2;v^{\star}_1,v^{\star}_2)(0,v)=0\qquad\textnormal{for any}\quad v\in  \mathcal{H}_2.
	\end{equation}
	
	The objective is to prove that for any initial state  $y_0\in L^2_{\mathcal{F}_0}(\Omega;L^2(G))$, there exist a couple of controls  $(u_1,u_2)\in  \mathcal{U}$ minimizing the functional $J$ and an associated Nash equilibrium   $(v^{\star}_1,v^{\star}_2)=(v^{\star}_1(u_1,u_2),v^{\star}_2(u_1,u_2))\in\mathcal{H}$  such that the associated solution of equation \eqref{eqq1.1} satisfies the null controllability
	\begin{equation}\label{ncontrol}
		y(T,\cdot)=0\;\;\textnormal{in}\;\;G,\quad\mathbb{P}\textnormal{-a.s.}
	\end{equation}
	To this end, we follow the Stackelberg-Nash strategy: For each choice of the leaders $(u_1,u_2)$, we look for a Nash equilibrium pair for the functionals $J_i$ ($i=1,2$), meaning finding the controls $v^{\star}_1(u_1,u_2)\in \mathcal{H}_1$ and $v^{\star}_2(u_1,u_2)\in \mathcal{H}_2$, depending on $(u_1,u_2)$ and satisfying \eqref{NE7}-\eqref{NE72}. Once the Nash equilibrium has been identified and fixed for each $(u_1,u_2)$, we proceed to determine controls $(\widehat{u}_1,\widehat{u}_2)\in \mathcal{U}$ such that
	\begin{equation*}\label{eq1.6}
		J(\widehat{u}_1,\widehat{u}_2)= \min_{(u_1,u_2)\in \mathcal{U}} J(u_1,u_2),
	\end{equation*}
	and the associated solution of \eqref{eqq1.1} satisfies the controllability property \eqref{ncontrol}. In the subsequent sections, the entire problem can be reduced to the controllability of a coupled forward-backward stochastic system. This is demonstrated by employing some appropriate Carleman estimates for forward and backward stochastic parabolic equations.
	
	Let us assume the following assumption
	\begin{equation}\label{Assump10}
		\mathcal{O}_d=\mathcal{O}_{1,d}=\mathcal{O}_{2,d} \quad \text{and}\quad \mathcal{O}_d\cap\mathcal{O}\neq\emptyset.
	\end{equation}
	The main result of this paper is stated as follows.
	\begin{thm}\label{th4.1SN}
		Let us assume that the assumption \eqref{Assump10} holds and $\beta_i>0$, $i=1,2$,  are sufficiently large. Then, there exists a positive weight function $\rho =\rho(t)$ blowing up at $t=T$ such that for every target functions $y_{i,d}\in \mathcal{H}_{i,d}$ satisfying
		\begin{equation}\label{inqAss11SN}
			\mathbb{E}\iint_{(0, T)\times \mathcal{O}_{i,d}}\rho^2 |y_{i,d}|^2 \,dx\,dt < \infty,\qquad i=1,2,
		\end{equation}
		and for every initial condition $y_0\in L^2_{\mathcal{F}_0}(\Omega;L^2(G))$, there exist controls $(\widehat{u}_1,\widehat{u}_2)\in \mathcal{U}$  minimizing $J$ and an associated Nash equilibrium $(v^{\star}_1,v^{\star}_2)\in\mathcal{H}$ such that the associated solution $\widehat{y}$ of  \eqref{eqq1.1} satisfies 
		$$\widehat{y}(T,\cdot) =0\;\;\textnormal{in}\;\;G,\quad\mathbb{P}\textnormal{-a.s.}$$
		Moreover, the controls $(\widehat{u}_1,\widehat{u}_2)$ can be chosen so that
		\begin{align*}
			\begin{aligned}
				|(\widehat{u}_1,\widehat{u}_2)|^2_{\mathcal{U}}\leq C\left[\mathbb{E}|y_0|^2_{L^2(G)}+ \sum_{i=1}^{2}\alpha^2_i \mathbb{E}\iint_{(0,T)\times \mathcal{O}_{i,d}} \rho^2 y^2_{i,d} \,dx \,dt\right],
			\end{aligned}
		\end{align*}
		where the positive constant $C$ depending on $G$, $\mathcal{O}_i$, $\mathcal{O}_{i,d}$, $T$, $a_1$, $a_2$, $B_1$ and $B_2$.
	\end{thm}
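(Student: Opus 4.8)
The plan is to argue in four stages. \emph{First}, fixing the leaders $(u_1,u_2)\in\mathcal{U}$, we analyse the followers' bi-objective problem. Each $J_i$ is strictly convex and coercive in $v_i$ since $\beta_i>0$, and for $\beta_1,\beta_2$ large the Nash map sending a pair to its two partial minimizers is a contraction on $\mathcal{H}$, which gives existence and uniqueness of the equilibrium $(v_1^\star,v_2^\star)$. Writing the first-order conditions \eqref{NE7}--\eqref{NE72} and introducing, for $i=1,2$, backward adjoint states $(p_i,q_i)$ — solutions of backward stochastic parabolic equations with $p_i(T)=0$ and source $\alpha_i\mathbbm{1}_{\mathcal{O}_d}(y-y_{i,d})$, where assumption \eqref{Assump10} allows both adjoints to be posed on the single set $\mathcal{O}_d$ — the Euler–Lagrange identities yield $v_i^\star=-\beta_i^{-1}\mathbbm{1}_{\mathcal{O}_i}p_i$. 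Substituting this back, $y$ together with $(p_1,q_1,p_2,q_2)$ solves a coupled forward–backward stochastic parabolic system, linear in $(y_0,u_1,u_2)$ and carrying the targets $y_{i,d}$ as inhomogeneous data. Thus the theorem reduces to: for every $y_0$, find $(u_1,u_2)\in\mathcal{U}$ of minimal $J$-cost such that the associated $y$ satisfies $y(T)=0$, $\mathbb{P}$-a.s.

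\emph{Second}, I would dualize. By a stochastic Fenchel–Rockafellar / penalization scheme, null controllability of the coupled system with the quadratic cost on $(u_1,u_2)$ is equivalent to an observability inequality for the adjoint of that system, which is again coupled — backward in the component $(z,Z)$ dual to the forward state $y$ (terminal datum $z(T)=z_T$, the martingale part $Z$ dual to the control $u_2$ sitting in the diffusion term) and forward in the components $\psi_1,\psi_2$ dual to the backward states $p_i$ (zero initial data), with $y_{i,d}$ again appearing as known data. The inequality to be proved reads
\begin{equation*}
\mathbb{E}|z(0)|_{L^2(G)}^2\le C\Bigl(\mathbb{E}\iint_{Q_0}\rho^{-2}|z|^2\,dx\,dt+\mathbb{E}\iint_{Q}\rho^{-2}|Z|^2\,dx\,dt+\sum_{i=1}^2\alpha_i^2\,\mathbb{E}\iint_{(0,T)\times\mathcal{O}_{i,d}}\rho^{2}|y_{i,d}|^2\,dx\,dt\Bigr),
\end{equation*}
where $\rho$ is the inverse Carleman weight, blowing up at $t=T$; this is precisely why \eqref{inqAss11SN} is imposed, to keep the last term finite.

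\emph{Third}, and this is the technical heart, I would establish this observability inequality through global Carleman estimates with a weight $\theta=e^{\ell}$, $\ell(t,x)\sim -e^{\lambda\phi(x)}/(t(T-t))$, modified near $t=0$ so that it degenerates only at $t=T$: a Carleman estimate for backward stochastic parabolic equations applied to $(z,Z)$, together with one for forward stochastic parabolic equations applied to each $\psi_i$; the zero-order coefficients $a_j$ and the first-order terms $B_j\cdot\nabla$ are absorbed for $\lambda$ and the large Carleman parameter big enough. Summing the estimates and using the coupling, every global weighted integral is dominated by local integrals, over some $\omega^\star\Subset\mathcal{O}\cap\mathcal{O}_d$ (nonempty thanks to \eqref{Assump10}), of $|z|$, $|\psi_1|$ and $|\psi_2|$. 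The delicate point is then to eliminate the local $\psi_i$-terms: since $\omega^\star\subset\mathcal{O}_d$, on $\omega^\star$ the $\psi_i$-equation is, schematically, ``$\alpha_i\psi_i=(\text{parabolic operator applied to }z)$'', so a local energy estimate with a suitable cutoff bounds $\iint_{\omega^\star}\rho^{-2}|\psi_i|^2$ by $\iint_{\omega^{\star\star}}\rho^{-2}(|z|^2+|Z|^2)$ up to lower-order Carleman terms that get absorbed, and it is here that $\beta_i$ large is used once more. This yields the observability inequality.

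\emph{Finally}, I would recover the leaders: minimizing over $z_T$ the strictly convex coercive functional built from the left-hand side above plus the linear pairings with $y_0$ and the $y_{i,d}$ (or a penalized version, then sending the penalization to zero), the unique minimizer $\widehat z_T$ determines, via the associated adjoint trajectory, $\widehat u_1=\rho^{-2}\mathbbm{1}_{\mathcal{O}}\widehat z$ and $\widehat u_2=\rho^{-2}\widehat Z$; these lie in $\mathcal{U}$, steer $\widehat y$ to $0$ at time $T$, minimize $J$ among admissible null controls, and the Euler–Lagrange identity together with the observability constant gives the stated bound for $|(\widehat u_1,\widehat u_2)|_{\mathcal{U}}^2$. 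Unwinding the equivalences of the first stage returns the leaders and an associated Nash equilibrium for the original problem. As indicated, the main obstacle is the coupled Carleman estimate of the third stage — in particular, removing the local $\psi_i$-observations using only $\mathcal{O}\cap\mathcal{O}_d\neq\emptyset$ and the largeness of $\beta_i$, while simultaneously keeping the first-order drift terms $B_j\cdot\nabla$ under control in the stochastic framework.
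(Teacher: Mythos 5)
Your overall architecture matches the paper's: characterize the Nash equilibrium via backward adjoints to get $v_i^\star=-\beta_i^{-1}\mathbbm{1}_{\mathcal{O}_i}z^i$, reduce to null controllability of a coupled forward--backward optimality system, dualize to an observability inequality, prove it by combining forward and backward Carleman estimates, and build the leaders variationally. However, there is a genuine gap at the point you yourself flag as delicate. You propose to apply the forward Carleman estimate to each $\psi_i$ separately and then remove the two local observation terms $\iint_{\omega^\star}|\psi_i|^2$ individually by ``inverting'' the $z$-equation on $\omega^\star\subset\mathcal{O}_d$. But under \eqref{Assump10} the backward equation for $z$ (the paper's $\phi$) contains only the combination $\alpha_1\psi^1+\alpha_2\psi^2$ on $\mathcal{O}_d$: your schematic identity ``$\alpha_i\psi_i=(\text{parabolic operator applied to }z)$'' holds for the \emph{sum}, not for each summand, so no local estimate on $\omega^\star$ can separate $\psi_1$ from $\psi_2$. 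The paper's resolution is structural: since both $\psi^i$ solve forward equations with the \emph{same} coefficients $a_1,a_2,B_1,B_2$ (differing only in the sources $\beta_i^{-1}\mathbbm{1}_{\mathcal{O}_i}\phi$), the function $h=\alpha_1\psi^1+\alpha_2\psi^2$ satisfies a single closed forward equation; one applies the Carleman estimate to $h$ alone and eliminates the single local term $\lambda^3\iint_{\mathcal{O}'}\theta^2\varphi^3h^2$ via the Itô product $d(\lambda^3\zeta\theta^2\varphi^3 h\phi)$, using that $h\mathbbm{1}_{\mathcal{O}_d}$ is precisely the source of the $\phi$-equation. The individual $\psi^i$ are never put through a Carleman estimate at all; they are recovered afterwards by a Gronwall/energy argument with the time-only weight $\rho^{-2}$, exploiting $\psi^i(0)=0$ and the source $\beta_i^{-1}\mathbbm{1}_{\mathcal{O}_i}\phi$.

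A second, related defect is in your stated observability inequality: it lacks the global terms $\sum_{i}\mathbb{E}\int_Q\rho^{-2}|\psi^i|^2\,dx\,dt$ on the left-hand side (and instead inserts the $y_{i,d}$ data on the right, even though the adjoint system is homogeneous and does not see $y_{i,d}$). Those weighted global $\psi^i$-terms are exactly what allows the cross term $\sum_i\alpha_i\,\mathbb{E}\iint y_{i,d}\psi^i$ in the dual functional to be absorbed by Cauchy--Schwarz against $\rho^2|y_{i,d}|^2$; a merely local bound on $\iint_{\omega^\star}\rho^{-2}|\psi_i|^2$ does not suffice. With the $h$-grouping and the Gronwall step above, both defects are repaired and the rest of your plan (penalized dual functional, coercivity from observability, weak limits of the approximate controls) goes through as in the paper. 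Your use of a contraction argument for the Nash equilibrium in place of the paper's Lax--Milgram coercivity is a harmless variation.
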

	\begin{rmk}
		\begin{enumerate}[1.]
			\item It would be quite interesting to consider weaker conditions than \eqref{Assump10}. For instance, studying Stackelberg-Nash controllability for linear stochastic parabolic equations under the geometric condition $\mathcal{O}_{1,d} \cap \mathcal{O} \neq \mathcal{O}_{2,d} \cap \mathcal{O}$ instead of \eqref{Assump10}. We refer to \cite{ArFeGu17} where the authors investigate similar problems for linear deterministic parabolic equations.
			\item The assumption \eqref{inqAss11SN} implies that the target functions $y_{i,d}$, $i=1,2$, approach $0$ as $t \rightarrow T$. This condition ensures that the leaders encounter no obstacles in controlling the system. It remains an open problem to determine whether this assumption is necessary.
		\end{enumerate}
	\end{rmk}
	The problem studied in this paper is considered for deterministic equations in numerous research articles. We do not attempt to give the complete list of existing references. As a substitute, we try to provide some necessary papers. The hierarchical control problem was first introduced by L. Lions for the  heat and wave  equations in  \cite{LiPa} and \cite{LiHy}, respectively. Afterwards, the authors in \cite{D02} and \cite{DL04}, combined the Nash and the Stackelberg strategies in the context of approximate controllability. We also refer to \cite{CF18, GRP02, GMR13, GRP01} for the application of these strategies for different evolution equations. For some other results in the case of exact controllability, we refer for instance to \cite{ArFeGu17, ArCaSa15, AAF, Calsavara, HSP18}. For some results of Stackelberg-Nash controllability for deterministic parabolic equations with dynamic boundary conditions, we refer to \cite{BoMaOuNash, BoMaOuNash2}.
	
	To the authors knowledge, \cite{oukBouElgMan,StNashdeg} are the only papers dealing with Stackelberg-Nash controllability for a class of stochastic parabolic equations: stochastic parabolic equations with dynamic boundary conditions and stochastic degenerate parabolic equations, respectively. For these reasons, the authors consider stochastic parabolic equations an interesting field of investigation for this type of question.\\
	
	The remainder of the paper is structured as follows: Section \ref{section2} deals with the existence, uniqueness, and characterization of the Nash equilibrium. In Section \ref{section3}, we prove the needed Carleman estimates. Section \ref{section4} is devoted to establishing the announced controllability result. Finally, in Section \ref{section5}, we conclude the paper with comments and highlight some open problems.
	
	\section{Nash-equilibrium}\label{section2}
	This section is devoted to demonstrating the existence and uniqueness, as well as characterizing the Nash equilibrium  in the sense
	of \eqref{NE7}-\eqref{NE72} for any $(u_1,u_2)\in \mathcal{U}$.
	\subsection{Existence and uniqueness} The following result shows the existence and uniqueness of the Nah equilibrium for the functionals $(J_1,J_2)$.
	\begin{prop}\label{propp4.1}
		There exists a large $\overline{\beta} >0$ such that, if $\beta_i\geq \overline{\beta}$ for $i = 1, 2$, then for each  $(u_1,u_2)\in \mathcal{U}$, there exists a unique Nash-equilibrium $(v^{\star}_1,v^{\star}_2)=(v^{\star}_1(u_1,u_2),v^{\star}_2(u_1,u_2))\in \mathcal{H}$ for $(J_1, J_2)$ associated to $(u_1,u_2)$. Furthermore, there exists a constant $C>0 $  such that
		\begin{equation}\label{propine4.1}
			|(v^{\star}_1,v^{\star}_2)|_{\mathcal{H}}\leq C\big(1+ |(u_1,u_2)|_{\mathcal{U}}\big),
		\end{equation}
		where the constant $C$ depends on $G$, $\mathcal{O}$, $T$, $\mathcal{O}_i$,  $\mathcal{O}_{i,d}$, $\alpha_i$, $\beta_i$, $y_0$, $a_1$, $a_2$, $B_1$ and $B_2$.
	\end{prop}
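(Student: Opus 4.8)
The plan is to convert the Nash conditions \eqref{NE7}-\eqref{NE72} into a single affine equation for the pair $(v_1,v_2)$ by introducing adjoint states, and then to solve that equation by a Banach fixed-point argument in which the largeness of $\beta_1,\beta_2$ supplies the contraction.

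\textbf{Step 1: first-order characterization via adjoint states.} Fix $(u_1,u_2)\in\mathcal U$ and $(v_1,v_2)\in\mathcal H$, and let $y=y(y_0,u_1,u_2,v_1,v_2)$. For $v\in\mathcal H_i$ denote by $\psi^i$ the derivative of the state with respect to $v_i$ in the direction $v$; since \eqref{eqq1.1} is linear, $\psi^i$ solves \eqref{eqq1.1} with zero initial datum, no leader terms, and the single source $v\,\mathbbm{1}_{\mathcal O_i}$ in the drift. Introduce the adjoint pair $(p^i,P^i)$ solving the backward stochastic parabolic equation
\[
\begin{cases}
dp^i=\big(-\Delta p^i-a_1 p^i+\nabla\cdot(B_1 p^i)-a_2 P^i+\nabla\cdot(B_2 P^i)+\alpha_i\,\mathbbm{1}_{\mathcal O_{i,d}}(y-y_{i,d})\big)\,dt+P^i\,dW(t)&\text{in }Q,\\
p^i=0&\text{on }\Sigma,\\
p^i(T)=0&\text{in }G,
\end{cases}
\]
which is well posed by the standard theory of backward stochastic parabolic equations, with $p^i\in L^2_\mathcal F(\Omega;C([0,T];L^2(G)))\cap L^2_\mathcal F(0,T;H^1_0(G))$ and $P^i\in L^2_\mathcal F(0,T;L^2(G))$. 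Applying It\^o's formula to $\langle\psi^i,p^i\rangle_{L^2(G)}$, integrating by parts in space (the divergence-form terms in $p^i,P^i$ compensating exactly the first-order terms $B_1\cdot\nabla\psi^i$ and $B_2\cdot\nabla\psi^i$), taking expectations, and using $\psi^i(0)=0$ and $p^i(T)=0$, one gets $\alpha_i\,\mathbb{E}\iint_{(0,T)\times\mathcal O_{i,d}}(y-y_{i,d})\psi^i\,dx\,dt=-\mathbb{E}\iint_{(0,T)\times\mathcal O_i}v\,p^i\,dx\,dt$. Since the left-hand side together with $\beta_i\,\mathbb{E}\iint_{(0,T)\times\mathcal O_i}v_i\,v\,dx\,dt$ is precisely $J_i'(u_1,u_2;v_1,v_2)$ evaluated in the $v_i$-direction $v$, the conditions \eqref{NE7}-\eqref{NE72} are equivalent to $v_i^\star=\tfrac1{\beta_i}\,\mathbbm{1}_{\mathcal O_i}\,p^i$ $(i=1,2)$, where $y=y(y_0,u_1,u_2,v_1^\star,v_2^\star)$ and $(p^i,P^i)$ are its adjoints. (Strict convexity of $v_i\mapsto J_i$, already noted before \eqref{NE7}, is what makes the first-order conditions equivalent to the Nash property.)

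\textbf{Step 2: fixed point and contraction.} Define $\Lambda:\mathcal H\to\mathcal H$ by $\Lambda(v_1,v_2)=\big(\tfrac1{\beta_1}\mathbbm{1}_{\mathcal O_1}p^1,\tfrac1{\beta_2}\mathbbm{1}_{\mathcal O_2}p^2\big)$, where $y$ solves \eqref{eqq1.1} with data $(y_0,u_1,u_2,v_1,v_2)$ and $(p^i,P^i)$ are the associated adjoints; by Step 1 a Nash equilibrium is exactly a fixed point of $\Lambda$, and substituting $v_i^\star=\tfrac1{\beta_i}\mathbbm{1}_{\mathcal O_i}p^i$ into \eqref{eqq1.1} and the two adjoint equations exhibits the coupled forward--backward stochastic system referred to in the Introduction. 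The map $\Lambda$ is affine; write $\Lambda=\Lambda_0+g$, with $\Lambda_0$ linear (the contribution of $(v_1,v_2)$ through the $v$-dependence of $y$) and $g\in\mathcal H$ collecting the contributions of $y_0$, $(u_1,u_2)$ and $y_{1,d},y_{2,d}$. By the well-posedness estimate for \eqref{eqq1.1} recalled in the Introduction, $v\mapsto y$ is bounded on $\mathcal H$ with a constant independent of $\beta_1,\beta_2$, and the energy estimate for the backward equations makes $y\mapsto(p^1,p^2)$ bounded with a constant depending on $\alpha_i$ and the coefficients but not on $\beta_i$; hence $\|\Lambda_0 v\|_{\mathcal H}\le \frac{C_0}{\min(\beta_1,\beta_2)}\|v\|_{\mathcal H}$ with $C_0$ independent of $\beta_1,\beta_2$. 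Choosing $\overline\beta:=2C_0$, for $\beta_i\ge\overline\beta$ the operator $\Lambda$ is a contraction on $\mathcal H$ and Banach's fixed-point theorem yields the unique Nash equilibrium $(v_1^\star,v_2^\star)=(v_1^\star(u_1,u_2),v_2^\star(u_1,u_2))$. Moreover $\|(v_1^\star,v_2^\star)\|_{\mathcal H}=\|(I-\Lambda_0)^{-1}g\|_{\mathcal H}\le 2\|g\|_{\mathcal H}\le C\big(1+\|(u_1,u_2)\|_{\mathcal U}\big)$, the constant absorbing the fixed data $y_0$, $y_{i,d}$, $\alpha_i$, $\beta_i$ and the coefficients; this is \eqref{propine4.1}.

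\textbf{Expected main obstacle.} The delicate points are (i) deriving the correct adjoint system and justifying the duality identity in the presence of the first-order terms $B_i\cdot\nabla y$: these produce divergence-form zero-order terms $\nabla\cdot(B_i p^i)$ and $\nabla\cdot(B_i P^i)$ with only $L^\infty_\mathcal F$ coefficients, so the It\^o/integration-by-parts computation has to be carried out in the variational (transposition) sense, with the well-posedness of the backward equation invoked in that framework; and (ii) making the dependence on $\beta_1,\beta_2$ explicit and uniform in the energy estimates, so that the contraction constant genuinely behaves like $1/\min(\beta_1,\beta_2)$. Beyond these, everything reduces to the standard well-posedness and energy estimates for forward and backward stochastic parabolic equations.
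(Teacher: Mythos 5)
Your argument is correct, but it follows a genuinely different route from the paper. The paper never touches the adjoint backward equations at this stage: it introduces the control-to-state operators $L_i\in\mathcal{L}(\mathcal{H}_i;L^2_{\mathcal{F}}(0,T;L^2(G)))$, writes $y=L_1(v_1)+L_2(v_2)+q$, recasts the Nash conditions \eqref{NE7}--\eqref{NE72} as the linear equation $\textbf{M}(v_1^\star,v_2^\star)=\Phi$ with $\textbf{M}(v_1,v_2)=\big(\alpha_i L_i^*[(L_1(v_1)+L_2(v_2))\mathbbm{1}_{\mathcal{O}_{i,d}}]+\beta_i v_i\big)_{i=1,2}$, observes that $\langle\textbf{M}v,v\rangle_{\mathcal{H}}\geq\sum_i(\beta_i-C)|v_i|^2_{\mathcal{H}_i}$, and concludes by Lax--Milgram once $\beta_i\geq\overline{\beta}$; the estimate \eqref{propine4.1} then falls out of coercivity. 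You instead characterize the Nash conditions through the backward adjoint pairs $(p^i,P^i)$ (your sign convention gives $p^i=-z^i$ relative to the paper's system \eqref{backadj}, hence $v_i^\star=+\tfrac{1}{\beta_i}\mathbbm{1}_{\mathcal{O}_i}p^i$ versus the paper's $-\tfrac{1}{\beta_i}z^i$ --- consistent) and solve the resulting fixed-point equation by a Banach contraction, with $\|\Lambda_0\|\lesssim C_0/\min(\beta_1,\beta_2)$ supplying the smallness. The two largeness conditions on $\beta_i$ are of the same nature (both are controlled by $\alpha_i$ times products of norms of $L_i$), and your duality identity is the same computation the paper performs in Section 2.2 to characterize the equilibrium. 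What each buys: the paper's route is more economical at this point because it postpones the backward adjoint equations entirely (only the forward well-posedness of \eqref{1.10} is needed for Lax--Milgram), whereas your route proves existence and the PDE characterization in one stroke and directly exhibits the optimality system \eqref{eqq4.7}; the price is that you must invoke well-posedness of the backward equation with the divergence-form terms $\nabla\cdot(B_1p^i+B_2P^i)$ already here, a point you correctly flag and which the paper anyway relies on later. One cosmetic remark: convexity plus differentiability (not strictness) is what makes the first-order conditions equivalent to the Nash property; strict convexity is only needed for uniqueness of each player's best response.
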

	\begin{proof}
		Let us consider the following bounded operators $L_i\in \mathcal{L}(\mathcal{H}_i; L_\mathcal{F}^2(0,T; L^2(G)))$ defined by
		\begin{align*}
			L_i(v_i)=y^i,\qquad i=1,2,
		\end{align*}
		where $y^i$ is the solution of 
		\begin{equation}\label{1.10}
			\begin{cases}
				\begin{array}{ll}
					dy^{i} - \Delta y^i \,dt = [a_1 y^{i}+B_1\cdot\nabla y^i+v_i\mathbbm{1}_{\mathcal{O}_i}] \,dt +[a_2 y^{i}+B_2\cdot\nabla y^i] \,dW(t)&\textnormal{in}\,\,Q,\\
					y^i=0 &\textnormal{on}\,\,\Sigma,\\
					y^{i}(0)=0&\textnormal{in}\,\,G.
				\end{array}
			\end{cases}
		\end{equation}
		Note that the solution $y$ of \eqref{eqq1.1} can be written as follows
		$$y=L_1(v_1)+L_2(v_2)+q,$$
		where $q=q(y_0,u_1,u_2)$ is the solution of
		\begin{equation*}\label{1.132}
			\begin{cases}
				\begin{array}{ll}
					dq - \Delta q \,dt = [a_1 q+B_1\cdot\nabla q+u_1\mathbbm{1}_{\mathcal{O}}] \,dt + [a_2 q+B_2\cdot\nabla q+u_2]\,dW(t)&\textnormal{in}\,\,Q,\\
					q=0 &\textnormal{on}\,\,\Sigma,\\
					q(0)=y_0&\textnormal{in}\,\,G.
				\end{array}
			\end{cases}
		\end{equation*}
		For fixed $(u_1,u_2)\in \mathcal{U}$, we have that for all $v_i\in \mathcal{H}_{i}$
		\begin{align*}
			J_i'(u_1,u_2; v^{\star}_1,v^{\star}_2)(v_i) =&\,\,\alpha_i \big\langle L_1(v^{\star}_1)+L_2(v^{\star}_2) +q-y_{i,d},	L_i(v_i)\big\rangle_{\mathcal{H}_{i,d}}+ \beta_i \big\langle v^{\star}_i,v_i\big\rangle_{\mathcal{H}_{i}},\qquad i=1,2.
		\end{align*}
		Then $(v^{\star}_1,v^{\star}_2)$ is a Nash equilibrium for $(J_1,J_2)$ if and only if	
		\begin{align}\label{4.3nashcara}
			\alpha_i \big\langle L_i^*\big[L_1(v^{\star}_1)+ L_2(v^{\star}_2) -(y_{i,d}-q)\big],	v_i\big\rangle_{\mathcal{H}_{i,d}}+ \beta_i \big\langle v^{\star}_i,v_i\big\rangle_{\mathcal{H}_{i}}=0,\quad\forall v_i\in \mathcal{H}_{i}, \qquad i=1,2,
		\end{align}
		which is equivalent to
		$$\alpha_i   L_i^*\left[  (L_1(v^{\star}_1)+ L_2(v^{\star}_2))\mathbbm{1}_{\mathcal{O}_{i,d}}\right]+ \beta_i v^{\star}_i=\alpha_i  L_i^*((y_{i,d}-q)\mathbbm{1}_{\mathcal{O}_{i,d}})\quad\textnormal{in}\;\;\mathcal{H}_i,\quad i=1,2,$$
		where $L_i^*\in \mathcal{L}(L_\mathcal{F}^2(0,T; L^2(G));\mathcal{H}_i)$ is the adjoint operator of $L_i$. Now, the objective is to show that there exists a unique $(v_1^*,v_2^*)\in\mathcal{H}$ such that
		$$\textbf{M}(v_1^*,v_2^*)=\Phi,$$
		where $\textbf{M}:\mathcal{H}\longrightarrow\mathcal{H}$ is the bounded operator defined by
		$$\textbf{M}(v_1,v_2)=\Big(\alpha_1 L_1^*\left[(L_1(v_1)+L_2(v_2))\mathbbm{1}_{\mathcal{O}_{1,d}}\right]+ \beta_1 v_1\,,\,\alpha_2 L_2^*[(L_1(v_1)+L_2(v_2))\mathbbm{1}_{\mathcal{O}_{2,d}}]+ \beta_2 v_2\Big),$$
		and
		$$\Phi=(\alpha_1  L_1^*((y_{1,d}-q)\mathbbm{1}_{\mathcal{O}_{1,d}}),\alpha_2  L_2^*((y_{2,d}-q)\mathbbm{1}_{\mathcal{O}_{2,d}})).
		$$
		On the other hand, it is easy to see that for any $(v_1,v_2)\in\mathcal{H}$, one has
		\begin{align*}
			\langle\textbf{M}(v_1,v_2),(v_1,v_2)\rangle_\mathcal{H}&=\sum_{i=1}^2\beta_i|v_i|_{\mathcal{H}_i}^2+\sum_{i,j=1}^2\alpha_i\langle L_j(v_j),L_i(v_i)\rangle_{\mathcal{H}_{i,d}}\\
			&\geq \sum_{i=1}^2(\beta_i-C)|v_i|_{\mathcal{H}_i}^2.
		\end{align*}
		Then for a large $\overline{\beta}>0$ so that $\beta_i\geq\overline{\beta}$, $i=1,2$, we conclude that
		\begin{align}\label{2.4corc}
			\langle\textbf{M}(v_1,v_2),(v_1,v_2)\rangle_\mathcal{H}\geq C|(v_1,v_2)|_\mathcal{H}^2.
		\end{align}
		We now introduce the bi-linear functional $\textbf{a}:\mathcal{H}\times\mathcal{H}\rightarrow\mathbb{R}$ as follows
		$$\textbf{a}((v_1,v_2),(\widetilde{v}_1,\widetilde{v}_2))=\langle \textbf{M}(v_1,v_2),(\widetilde{v}_1,\widetilde{v}_2)\rangle_\mathcal{H}\qquad\textnormal{for any}\quad \big((v_1,v_2),(\widetilde{v}_1,\widetilde{v}_2)\big)\in\mathcal{H}\times\mathcal{H},$$
		and also define the following linear continuous functional $\Psi:\mathcal{H}\rightarrow\mathbb{R}$ by
		$$\Psi(v_1,v_2)=\big\langle(v_1,v_2),(\alpha_1 L_1^*((y_{1,d}-q)\mathbbm{1}_{\mathcal{O}_{1,d}}),\alpha_2 L_2^*((y_{2,d}-q)\mathbbm{1}_{\mathcal{O}_{2,d}}))\big\rangle_\mathcal{H}\qquad\textnormal{for any}\quad(v_1,v_2)\in\mathcal{H}.$$
		Notice that $\textbf{a}$ is continuous, and from \eqref{2.4corc}, it is coercive. Therefore, by Lax-Milgram theorem, there exists a unique $(v^*_1,v^*_2)\in\mathcal{H}$ so that
		\begin{align}\label{4.4105}
			\textbf{a}((v^*_1,v^*_2),(v_1,v_2))=\Psi(v_1,v_2)\qquad\textnormal{for any}\quad(v_1,v_2)\in\mathcal{H}.
		\end{align}
		This shows the existence and uniqueness of the Nash equilibrium $(v^*_1,v^*_2)$ for $(J_1,J_2)$ associated to $(u_1,u_2)$. Finally, from \eqref{4.4105}, we have that
		$$|(v^*_1,v^*_2)|_\mathcal{H}\leq C|(\alpha_1 L_1^*((y_{1,d}-q)\mathbbm{1}_{\mathcal{O}_{1,d}}),\alpha_2 L_2^*((y_{2,d}-q)\mathbbm{1}_{\mathcal{O}_{2,d}}))|_\mathcal{H},$$
		which easily implies the inequality \eqref{propine4.1}.
	\end{proof}
	\subsection{Characterization of the Nash-equilibrium}
	Let us now  characterize the Nash equilibrium by the following adjoint systems, $i=1,2$,   
	\begin{equation}\label{backadj}
		\begin{cases} 
			\begin{array}{ll}
				dz^{i}+\Delta z^i \,dt=\big[-a_1 z^{i}-a_2 Z^{i}+\nabla\cdot(z^iB_1+Z^iB_2)-\alpha_i(y-y_{i,d})\mathbbm{1}_{\mathcal{O}_{i,d}}\big] \,dt+Z^{i} \,dW(t) &\textnormal{in}\,\,Q, \\
				z^i=0&\textnormal{on}\,\,\Sigma, \\ z^i(T)=0&\textnormal{in}\,\,G.
			\end{array}
		\end{cases}
	\end{equation}
	By applying Itô formula for solutions of \eqref{eqq1.1} and \eqref{backadj} and integration by parts, one has
	\begin{equation*}
		\alpha_i\big\langle y(u_1,u_2;v_1,v_2)-y_{i,d},     L_i(v_i)\big\rangle_{\mathcal{H}_{i,d}}= \langle z^i, v_i\rangle_{\mathcal{H}_i},\qquad i=1,2,
	\end{equation*}
	which with \eqref{4.3nashcara} yield that $(v^{\star}_1,v^{\star}_2)$ is a Nash-equilibrium if and only if 
	$$\langle z^i, v_i\rangle_{\mathcal{H}_i}+\beta_i\langle v^{\star}_i, v_i\rangle_{\mathcal{H}_i}=0\quad\textnormal{for any}\quad v_i\in \mathcal{H}_i,\qquad i=1,2.$$
	Hence, 
	\begin{equation*}\label{chara.1}
		v^{\star}_i=-\frac{1}{\beta_i} z^i|_{(0,T)\times \mathcal{O}_i}, \qquad i=1,2.
	\end{equation*}
	Therefore, the problem is then reduced to show the  null controllability for solutions of the following coupled forward-backward system, called the optimality system
	\begin{equation}\label{eqq4.7}
		\begin{cases}
			\begin{array}{ll}
				dy - \Delta y \,dt = \left[a_1y+B_1\cdot\nabla y+u_1\mathbbm{1}_{\mathcal{O}}-\displaystyle\sum_{i=1}^2\frac{1}{\beta_i}z^i\mathbbm{1}_{\mathcal{O}_i}\right] \,dt+ \left[a_2y +B_2\cdot\nabla y+u_2\right]\,dW(t)&\textnormal{in}\,\,Q,\\
				dz^{i}+\Delta z^{i} \,dt=\left[-a_1 z^{i}-a_2 Z^{i}+\nabla\cdot(z^iB_1+Z^iB_2)-\alpha_i(y-y_{i,d})\mathbbm{1}_{\mathcal{O}_{i,d}}\right] \,dt+Z^{i} \,dW(t)&\textnormal{in}\,\,Q, \\ 
				y=0, \,\,z^i=0&\textnormal{on}\,\,\Sigma,\\
				y(0)=y_0 &\textnormal{in}\,\, G, \\
				z^i(T)=0, \qquad i=1,2, &\textnormal{in}\,\,G.
			\end{array}
		\end{cases}
	\end{equation}
	By the classical duality argument, the null controllability of \eqref{eqq4.7} is equivalent to show an appropriate observability inequality for the following adjoint backward-forward system
	\begin{equation}\label{ADJSO1}
		\begin{cases}
			d\phi+\Delta \phi \,dt=\left[-a_1 \phi-a_2\Phi+\nabla\cdot(\phi B_1+\Phi B_2)+\displaystyle\sum_{i=1}^2\alpha_i \psi^{i}\mathbbm{1}_{\mathcal{O}_{i,d}}\right] \,dt+\Phi \,dW(t) &\textnormal{in}\,\,Q, \\ 
			d\psi^{i} - \Delta\psi^{i}\,dt = \left[a_1\psi^{i}+B_1\cdot\nabla\psi^i+\frac{1}{\beta_{i}}\mathbbm{1}_{\mathcal{O}_i}(x)\phi\right] \,dt + \left[a_2\psi^{i}+B_2\cdot\nabla\psi^i\right] \,dW(t)&\textnormal{in}\,\,Q,\\
			\phi=0, \,\,\psi^i=0 &\textnormal{on}\,\,\Sigma, \\ 
			\phi(T)=\phi_T&\textnormal{in}\,\,G,\\
			\psi^i(0)=0, \qquad i=1,2, &\textnormal{in}\,\,G.
		\end{cases}
	\end{equation}
	\section{Carleman estimates for the coupled system \eqref{ADJSO1}}\label{section3}
	In this section, we prove some Carleman estimates for the coupled system \eqref{ADJSO1}. Let us first define some weight functions based on the following result, which is proven in \cite{BFurIman}.
	\begin{lm}\label{lmm5.1}
		For any nonempty open subset $\mathcal{O}'\Subset G$, there exists a function $\eta\in C^4(\overline{G})$ such that
		$$
		\eta>0\;\,\, \textnormal{in} \,\,G\,;\qquad \eta=0\;\,\,\, \textnormal{on} \,\,\Gamma;\qquad\vert\nabla\eta\vert>0\; \,\,\,\,\textnormal{in}\,\,\overline{G\setminus \mathcal{O}'}.
		$$
	\end{lm}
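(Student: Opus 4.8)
The plan is to first build a smooth function on $\overline{G}$ with the required behaviour at $\Gamma$, and then to \emph{transport all of its critical points into} $\mathcal{O}'$ by a diffeomorphism of $\overline{G}$ that is the identity near $\Gamma$; since a diffeomorphism sends critical points to critical points and creates no new ones, the transported function will be the desired $\eta$.

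First I would fix a nonempty open ball $\omega'\Subset\mathcal{O}'$; it is enough to prove the statement with $\mathcal{O}'$ replaced by $\omega'$, since $\overline{G\setminus\mathcal{O}'}\subset\overline{G\setminus\omega'}$. Next I would choose $\psi_0\in C^\infty(\overline{G})$ with $\psi_0>0$ in $G$, $\psi_0=0$ on $\Gamma$ and $\partial_\nu\psi_0<0$ on $\Gamma$ — for instance the solution of $-\Delta\psi_0=1$ in $G$, $\psi_0=0$ on $\Gamma$, which is positive by the maximum principle, has nonvanishing normal derivative by Hopf's lemma, and is smooth because $\Gamma$ is; alternatively $\psi_0$ can be constructed by hand from a collar of $\Gamma$. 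Since $\partial_\nu\psi_0\neq 0$ on $\Gamma$, the critical set of $\psi_0$ is a compact subset of $G$; after a generic $C^\infty$-small perturbation supported away from $\Gamma$ (which preserves the sign, the boundary value and the sign of $\partial_\nu\psi_0$), I may assume, by the genericity of Morse functions, that $\psi_0$ is Morse, so that its critical set is a finite set $\{a_1,\dots,a_m\}\subset G$.

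The core of the argument — and the only nontrivial point — is the construction of the transporting diffeomorphism. For $N\ge 2$ I would pick pairwise disjoint $C^\infty$ embedded arcs $\ell_1,\dots,\ell_m$ in $G$, with $\ell_j$ joining $a_j$ to a point $b_j\in\omega'$, each $\ell_j$ meeting $\{a_1,\dots,a_m\}$ only at $a_j$ and $\overline{\omega'}$ only along a terminal subarc; these exist by general position, as $G$ is connected and arcs have codimension $\ge 1$. Choosing thin, pairwise disjoint tubular neighbourhoods $U_j\Subset G$ of the $\ell_j$ with $U_j\cap\{a_1,\dots,a_m\}=\{a_j\}$, the isotopy extension theorem (flow along a vector field supported in $U_j$ that near $\ell_j$ equals the velocity field of $\ell_j$) yields a diffeomorphism $\Phi\colon\overline{G}\to\overline{G}$, isotopic to the identity, equal to the identity outside $\bigcup_j U_j$ — in particular near $\Gamma$ — with $\Phi(a_j)=b_j\in\omega'$. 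Then $\eta:=\psi_0\circ\Phi^{-1}$ lies in $C^\infty(\overline{G})\subset C^4(\overline{G})$, satisfies $\eta>0$ in $G$ and $\eta=0$ on $\Gamma$ (because $\Phi$ maps $G$ onto $G$ and is the identity near $\Gamma$), and has $\nabla\eta(x)=0$ exactly when $x\in\{b_1,\dots,b_m\}\subset\omega'$; since $\overline{\omega'}\subset\mathcal{O}'$ gives $\overline{G\setminus\mathcal{O}'}\cap\omega'=\emptyset$, we conclude $|\nabla\eta|>0$ on $\overline{G\setminus\mathcal{O}'}$. The case $N=1$ is elementary: if $G=(\alpha,\beta)$, one takes $\eta\in C^\infty([\alpha,\beta])$ vanishing at $\alpha$ and $\beta$, positive in between, strictly increasing on $[\alpha,c_1]$ and strictly decreasing on $[c_2,\beta]$ with $[c_1,c_2]\subset\mathcal{O}'$, so that $\eta'$ vanishes only inside $\mathcal{O}'$.

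In short, the hard part is purely differential-topological: making the connecting arcs disjoint and disjoint from the remaining critical points, and realizing the push along these arcs as an ambient diffeomorphism fixing a neighbourhood of $\Gamma$ while creating no new critical points — which is precisely what general position and the isotopy extension theorem provide. Everything else (existence and smoothness of $\psi_0$, the Morse perturbation, and the bookkeeping with $\omega'\Subset\mathcal{O}'$) is routine; full details can be found in \cite{BFurIman}.
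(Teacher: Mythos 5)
Your argument is correct and is essentially the classical Fursikov--Imanuvilov construction (take a function vanishing on $\Gamma$ with nonvanishing normal derivative, make it Morse, then push its finitely many critical points into $\mathcal{O}'$ by an ambient diffeomorphism fixing a neighbourhood of $\Gamma$), which is exactly the proof the paper defers to by citing \cite{BFurIman}. The only point worth polishing is the appeal to ``general position'' for the disjoint connecting arcs when $N=2$, where transversality alone does not give disjointness and one should instead choose the arcs inductively, using that removing a compactly contained arc from a connected planar domain leaves it connected.
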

	For large parameters $\lambda>1$ and $\mu>1$, we choose the weight functions
	\begin{align*}
		&\,\alpha=\alpha(t,x) = \frac{e^{\mu\eta(x)}-e^{2\mu\vert\eta\vert_\infty}}{t(T-t)},\qquad \varphi=\varphi(t,x) = \frac{e^{\mu\eta(x)}}{t(T-t)},\qquad\theta=e^{\lambda\alpha}.
	\end{align*}
	It is easy to check that there exists a constant $C=C(G,T)>0$ so that
	\begin{align}\label{aligned123}
		\begin{aligned}
			&\,\varphi\geq C,\quad\qquad\vert\varphi_t\vert\leq C\varphi^2,\quad\qquad\vert\varphi_{tt}\vert\leq C\varphi^3,\\
			&\vert\alpha_t\vert\leq Ce^{2\mu\vert\eta\vert_\infty}\varphi^2,\quad\qquad\vert\alpha_{tt}\vert\leq Ce^{2\mu\vert\eta\vert_\infty}\varphi^3.
	\end{aligned}\end{align}
	Let us first introduce the following forward stochastic parabolic equation
	\begin{equation}\label{eqqgfr}
		\begin{cases}
			\begin{array}{ll}
				dz - \Delta z \,dt = (F_1+\nabla\cdot F) \,dt + F_2\,dW(t)&\textnormal{in}\,\,Q,\\
				z=0&\textnormal{on}\,\,\Sigma,\\
				z(0)=z_0&\textnormal{in}\,\,G,
			\end{array}
		\end{cases}
	\end{equation}
	where $z_0\in L^2_{\mathcal{F}_0}(\Omega;L^2(G))$ is the initial state, $F_1, F_2\in L^2_\mathcal{F}(0,T;L^2(G))$ and $F\in L^2_\mathcal{F}(0,T;L^2(G;\mathbb{R}^N))$. We now recall the following Carleman estimate for solutions of equation \eqref{eqqgfr}. The proof is given in \cite[Theorem 3.3]{Preprintelgrou23}.
	\begin{lm}
		There exist a large $\mu_1>1$ such that for all $\mu\geq\mu_1$, one can find constants $C>0$ and $\lambda_1>1$ depending only on $G$, $\mathcal{O}$, $\mu$ and $T$ such that for all $\lambda\geq\lambda_1$, $F_1,F_2\in L^2_\mathcal{F}(0,T;L^2(G))$, $F\in L^2_\mathcal{F}(0,T;L^2(G;\mathbb{R}^N))$ and $z_0\in L^2_{\mathcal{F}_0}(\Omega;L^2(G))$, the solution $z$ of \eqref{eqqgfr} satisfies that
		\begin{align}\label{carfor5.6}
			\begin{aligned}
				&\;\lambda^3\mathbb{E}\int_Q\theta^2\varphi^3 z^2\,dx\,dt+\lambda\mathbb{E}\int_Q \theta^2\varphi|\nabla z|^2\,dx\,dt\\
				&\leq C \Bigg[ \lambda^3\mathbb{E}\int_{Q_0} \theta^2\varphi^3 z^2 \,dx\,dt+ \mathbb{E}\int_Q \theta^2F_1^2 \,dx\,dt\\
				&\hspace{.8cm}+\lambda^2\mathbb{E}\int_Q \theta^2\varphi^2 |F|^2 \,dx\,dt
				+\lambda^2\mathbb{E}\int_Q \theta^2\varphi^2F_2^2 \,dx\,dt\Bigg]. 
		\end{aligned}\end{align}
	\end{lm}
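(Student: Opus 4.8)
\emph{Proof strategy.} The inequality is the standard global Carleman estimate for a \emph{forward} stochastic parabolic equation with a divergence--form source, and I would prove it by the conjugated--operator (weighted pointwise identity) method, in the spirit of Tang--Zhang, Fu--Liu--Zhang and L\"u. First I would set $v=\theta z$; since $\theta=e^{\lambda\alpha}$ is deterministic, It\^o's formula gives $dv=\theta\,dz+\theta_t\,z\,dt$, so that $v$ solves
\[
dv-\Delta v\,dt=\big(\mathcal{B}v+\theta F_1+\theta\,\nabla\cdot F\big)\,dt+\theta F_2\,dW(t)\ \ \text{in }Q,\qquad v=0\ \text{on }\Sigma,
\]
where $\mathcal{B}v$ gathers the terms created by the conjugation; using $\nabla\alpha=\mu\varphi\nabla\eta$ and \eqref{aligned123}, its principal parts are $\lambda^2\mu^2\varphi^2|\nabla\eta|^2v$, $2\lambda\mu\varphi\,\nabla\eta\cdot\nabla v$ and $\lambda\alpha_t v$, all remaining contributions being lower order in $\lambda$ and $\varphi$. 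Since $\theta$ (hence $v$) vanishes at $t=0$ and $t=T$, no initial--datum term will appear in \eqref{carfor5.6}. I would then split the conjugated operator as $I_1v+I_2v$, with $I_1$ the skew part (containing $dv$ and $2\lambda\mu\varphi\,\nabla\eta\cdot\nabla v$) and $I_2$ the symmetric part (containing $\Delta v$ and $\lambda^2\mu^2\varphi^2|\nabla\eta|^2v$), so that $I_1v+I_2v=\theta F_1+\theta\,\nabla\cdot F$ modulo the lower--order terms of $\mathcal{B}$ and the martingale part.

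The core of the proof is the analysis of the cross term $2\,\mathbb{E}\iint_Q (I_1v)(I_2v)\,dx\,dt$: expanding it and integrating by parts in $x$ (the boundary integrals on $\Sigma$ vanishing since $v=\theta z=0$ there, except for one surviving term which has the correct sign because $\eta>0$ in $G$ and $\eta=0$ on $\Gamma$) and in $t$, one extracts the two dominant nonnegative terms $c\lambda^3\mu^4\,\mathbb{E}\iint_Q\theta^2\varphi^3z^2$ and $c\lambda\mu^2\,\mathbb{E}\iint_Q\theta^2\varphi|\nabla z|^2$, plus terms supported on $(0,T)\times\mathcal{O}'$, where $\nabla\eta$ may vanish, and terms of strictly lower order in $\lambda,\mu,\varphi$. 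Fixing first $\mu$ large and then $\lambda$ large, the latter are absorbed by the two dominant ones. Crucially, the integrations by parts in $t$ must be carried out with It\^o's formula: $d(v^2)$, $d(|\nabla v|^2)$ and $d(v\,\nabla\eta\cdot\nabla v)$ produce It\^o correction terms of size $\theta^2F_2^2$ and, after the spatial differentiations, of size $\lambda^2\mu^2\theta^2\varphi^2F_2^2$; taking expectation kills the genuine stochastic integrals and leaves exactly the right--hand--side term $\lambda^2\mathbb{E}\iint_Q\theta^2\varphi^2F_2^2$.

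It then remains to treat the source and to localize the gradient. The contribution of $\theta F_1$ in the identity is estimated by Young's inequality against $\varepsilon\lambda^3\mu^4\,\mathbb{E}\iint_Q\theta^2\varphi^3z^2$ with remainder $C_\varepsilon\,\mathbb{E}\iint_Q\theta^2F_1^2$, while the divergence term is handled by integrating by parts once more to move the derivative onto the weighted multiplier, which introduces extra factors $\lambda\mu\varphi$ and, after Young's inequality, produces $\varepsilon\lambda\mu^2\,\mathbb{E}\iint_Q\theta^2\varphi|\nabla z|^2+\varepsilon\lambda^3\mu^4\,\mathbb{E}\iint_Q\theta^2\varphi^3z^2+C_\varepsilon\lambda^2\,\mathbb{E}\iint_Q\theta^2\varphi^2|F|^2$, the first two pieces being absorbed for $\varepsilon$ small. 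Finally, to convert the residual local integral $\lambda\mu^2\,\mathbb{E}\iint_{(0,T)\times\mathcal{O}'}\theta^2\varphi|\nabla z|^2$ into a local integral of $z$ over $Q_0$, I would choose $\mathcal{O}'\Subset\mathcal{O}$ in Lemma~\ref{lmm5.1}, fix a cutoff $\xi\in C_c^\infty(\mathcal{O})$ with $\xi\equiv1$ on $\overline{\mathcal{O}'}$, apply It\^o's formula to $\theta^2\varphi\,\xi^2z^2$ (again picking up a $\theta^2\varphi F_2^2$--type correction, absorbed as above) and use the equation for $z$ to bound that integral by $\varepsilon\lambda^3\mu^4\,\mathbb{E}\iint_Q\theta^2\varphi^3z^2+C_\varepsilon\lambda^3\,\mathbb{E}\iint_{Q_0}\theta^2\varphi^3z^2$ plus the already controlled source terms. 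Collecting everything and taking $\lambda$ large once more yields \eqref{carfor5.6}.

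The main obstacle is precisely the stochastic bookkeeping in the time integrations by parts: each weighted quadratic functional of $v$ differentiated by It\^o's formula generates a correction term, and one has to verify that all of them --- in particular the ones appearing after spatial differentiation, which carry the highest powers of $\lambda$ and $\varphi$ --- are dominated by the single term $\lambda^2\mathbb{E}\iint_Q\theta^2\varphi^2F_2^2$, and that no term with an extra derivative on $F_2$ survives. A second, more routine, difficulty is that the underlying pointwise identity is rigorous only for sufficiently regular solutions, so the estimate is first proved for smooth data and then extended to general $F_1,F_2\in L^2_\mathcal{F}(0,T;L^2(G))$, $F\in L^2_\mathcal{F}(0,T;L^2(G;\mathbb{R}^N))$ and $z_0\in L^2_{\mathcal{F}_0}(\Omega;L^2(G))$ by density, using the well--posedness of \eqref{eqqgfr}. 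These steps are carried out in full in \cite[Theorem 3.3]{Preprintelgrou23}.
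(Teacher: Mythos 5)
The paper does not actually prove this lemma --- it is imported verbatim from \cite[Theorem 3.3]{Preprintelgrou23} --- so there is no internal argument to compare against; your outline reproduces the standard conjugated-operator (weighted pointwise identity) method that the cited reference uses, and all the structural ingredients (conjugation $v=\theta z$, splitting into symmetric and skew parts, cross-term expansion, absorption of the local gradient term by a cutoff, extra integration by parts for the divergence-form source yielding the $\lambda^2\varphi^2|F|^2$ term) are the right ones. One clarification on the point you yourself flag as the main obstacle: the quadratic-variation correction coming from $d(|\nabla v|^2)$ is literally $|\nabla(\theta F_2)|^2\,dt$, which contains $\theta^2|\nabla F_2|^2$ and is \emph{not} of size $\lambda^2\mu^2\theta^2\varphi^2F_2^2$ as written in your sketch. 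The reason no derivative of $F_2$ appears in \eqref{carfor5.6} is not that this term is dominated by the right-hand side, but that in the stochastic weighted identity it enters with a favourable sign --- it sits on the same side as the positive energy terms and is simply discarded --- while only the zeroth-order correction $(dv)^2=\theta^2F_2^2\,dt$, multiplied by coefficients of order $\lambda^2\varphi^2$, survives on the side that must be estimated. With that sign bookkeeping made explicit (and the density argument you mention to pass from smooth to $L^2$ data), your sketch is the standard proof.
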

	We need also to consider the following backward stochastic parabolic equation
	\begin{equation}\label{eqqgbc}
		\begin{cases}
			dz+\Delta z\,dt=(F_3+\nabla\cdot F) \,dt+ Z \,dW(t) & \textnormal{in}\,\,Q,\\ 
			z=0 & \textnormal{on}\,\,\Sigma,\\
			z(T)=z_T & \textnormal{in}\,\, G,
		\end{cases}
	\end{equation}
	where $z_T\in L^2_{\mathcal{F}_T}(\Omega;L^2(G))$ is the terminal state, $F_3\in L^2_\mathcal{F}(0,T;L^2(G))$ and $F\in L^2_\mathcal{F}(0,T;L^2(G;\mathbb{R}^N))$. We have the following Carleman estimate for solutions of \eqref{eqqgbc}, see \cite[Theorem 2.2]{Preprintelgrou23} for the proof.
	\begin{lm}
		There exist a large $\mu_2>1$ such that for $\mu\geq\mu_2$, one can find constants $C>0$ and $\lambda_2>1$ depending only on $G$, $\mathcal{O}$, $\mu$ and $T$ such that for all $\lambda\geq\lambda_2$, $F_3\in L^2_\mathcal{F}(0,T;L^2(G))$, $F\in L^2_\mathcal{F}(0,T;L^2(G;\mathbb{R}^N))$ and $z_T\in L^2_{\mathcal{F}_T}(\Omega;L^2(G))$, the solution $(z,Z)$ of \eqref{eqqgbc} satisfies
		\begin{align}\label{carback5.8}
			\begin{aligned}
				&\;\lambda^3\mathbb{E}\int_Q\theta^2\varphi^3 z^2\,dx\,dt+\lambda\mathbb{E}\int_Q \theta^2\varphi|\nabla z|^2\,dx\,dt\\
				&\leq C \Bigg[ \lambda^3\mathbb{E}\int_{Q_0} \theta^2\varphi^3 z^2 \,dx\,dt+ \mathbb{E}\int_Q \theta^2 F_3^2 \,dx\,dt\\
				&\hspace{0.8cm}+\lambda^2\mathbb{E}\int_Q \theta^2\varphi^2 |F|^2 \,dx\,dt+\lambda^2\mathbb{E}\int_Q \theta^2\varphi^2 Z^2 \,dx\,dt\Bigg]. 
			\end{aligned}
		\end{align}
	\end{lm}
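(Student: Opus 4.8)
The plan is to prove \eqref{carback5.8} by the weighted-multiplier (Fursikov--Imanuvilov type) method adapted to backward stochastic parabolic equations, following the pointwise Carleman identities for stochastic parabolic operators due to Tang--Zhang and L\"u; the detailed computation is the one in \cite[Theorem 2.2]{Preprintelgrou23}, and I only sketch its structure here. First I would set $\ell:=\lambda\alpha$, $v:=\theta z$ and $V:=\theta Z$. Since $\theta=e^\ell$ is deterministic, one computes $\theta\,dz=dv-\ell_t v\,dt$ and $\theta\Delta z=\Delta v-2\nabla\ell\cdot\nabla v+(|\nabla\ell|^2-\Delta\ell)v$, so that $(v,V)$ solves
\[
dv+\Delta v\,dt=\big[2\nabla\ell\cdot\nabla v-(|\nabla\ell|^2-\Delta\ell-\ell_t)v+\theta(F_3+\nabla\cdot F)\big]\,dt+V\,dW(t)\quad\text{in }Q,
\]
with $v=0$ on $\Sigma$, and with $v$ and the weighted energy of $v$ vanishing as $t\to0^+$ and $t\to T^-$ because of the blow-up of $\theta$ there. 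From $\nabla\alpha=\mu\varphi\nabla\eta$ and \eqref{aligned123}, the first-order coefficient in this equation is $O(\lambda\mu\varphi)$ and the zeroth-order one $O(\lambda^2\mu^2\varphi^2)$.

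The heart of the proof is then the pointwise weighted identity: multiply the equation for $v$ by a suitable multiplier (schematically $-2\Delta v-2\ell_1 v$, with $\ell_1$ a coefficient of order $\lambda^2\mu^2\varphi^2$), apply It\^o's formula, and integrate by parts in $x$; the stochastic corrections are arranged so that no derivative of $Z$ is needed, only the undifferentiated quantity $V^2=\theta^2Z^2$. Integrating over $Q$ and taking expectations, the It\^o integrals $\int_0^T(\cdots)\,dW(t)$ have zero mean, the temporal boundary terms at $t=0,T$ vanish by the decay of $\theta$, and the spatial boundary integral reduces, by $v|_\Sigma=0$, to a term proportional to $\int_\Sigma(\partial_\nu\eta)(\text{positive weight})|\partial_\nu v|^2\le0$ (since $\eta>0$ in $G$ and $\eta|_\Gamma=0$ give $\partial_\nu\eta\le0$ on $\Gamma$, by Lemma \ref{lmm5.1}), which is discarded. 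Fixing first $\mu\ge\mu_2$ large enough to absorb the $\mu$-dependent cross terms and then $\lambda\ge\lambda_2$ large, the positive Carleman terms $\lambda^3\mathbb{E}\int_Q\theta^2\varphi^3 z^2\,dx\,dt$ and $\lambda\mathbb{E}\int_Q\theta^2\varphi|\nabla z|^2\,dx\,dt$ are retained (after translating back from $v$ to $z$ via $|\nabla v|^2\le C(\theta^2|\nabla z|^2+\lambda^2\mu^2\theta^2\varphi^2z^2)$), and the remaining terms are: a local one $\lambda^3\mathbb{E}\int_{(0,T)\times\mathcal{O}'}\theta^2\varphi^3 z^2\,dx\,dt$ (coming from the set $\overline{G\setminus\mathcal{O}'}$, where $|\nabla\eta|>0$ may fail); the source term $\mathbb{E}\int_Q\theta^2 F_3^2\,dx\,dt$; the It\^o correction, which after the balancing contributes $\lambda^2\mathbb{E}\int_Q\theta^2\varphi^2 Z^2\,dx\,dt$; and the divergence source, which after one integration by parts in $x$ and Young's inequality (the residual copies of $\lambda^3\mathbb{E}\int_Q\theta^2\varphi^3 z^2$ and $\lambda\mathbb{E}\int_Q\theta^2\varphi|\nabla z|^2$ being absorbed on the left) contributes $\lambda^2\mathbb{E}\int_Q\theta^2\varphi^2|F|^2\,dx\,dt$.

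Finally, to replace the integral over $\mathcal{O}'$ by one over $\mathcal{O}$, I would choose $\mathcal{O}'\Subset\mathcal{O}$ in Lemma \ref{lmm5.1} and a cut-off $\chi\in C_c^\infty(\mathcal{O})$ with $0\le\chi\le1$ and $\chi\equiv1$ on $\mathcal{O}'$; applying It\^o's formula to $d(\lambda^3\theta^2\varphi^3\chi^2z^2)$ with \eqref{eqqgbc}, integrating over $Q$ and using Young's inequality, $\lambda^3\mathbb{E}\int_{(0,T)\times\mathcal{O}'}\theta^2\varphi^3z^2\,dx\,dt$ is bounded by $\delta$ times the left-hand side of \eqref{carback5.8} plus $C_\delta\big(\lambda^3\mathbb{E}\int_{Q_0}\theta^2\varphi^3z^2\,dx\,dt+\mathbb{E}\int_Q\theta^2F_3^2\,dx\,dt+\lambda^2\mathbb{E}\int_Q\theta^2\varphi^2(|F|^2+Z^2)\,dx\,dt\big)$; choosing $\delta$ small then yields \eqref{carback5.8}.

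I expect the main obstacle to be the simultaneous treatment of the divergence-form source $\nabla\cdot F$ and of the It\^o correction produced by the diffusion $Z\,dW(t)$: one has to carry out the spatial integrations by parts against multipliers of order $\lambda^2\mu^2\varphi^2$ while tracking every power of $\lambda$, $\mu$ and $\varphi$, so that only the genuinely uncontrollable contributions are moved to the right with the stated weights and everything else is absorbed on the left; in addition, the boundary integration by parts must be justified for the backward pair $(z,Z)$, whose a priori regularity is limited, by first proving the identity for smooth data and then passing to the limit by a density argument.
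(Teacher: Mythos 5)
The paper does not prove this lemma at all — it is quoted verbatim from \cite[Theorem 2.2]{Preprintelgrou23} — and your sketch reconstructs exactly the weighted-identity (Fursikov--Imanuvilov/Tang--Zhang) argument on which that reference rests, so in substance you are following the same route, and the outline is sound: the points you single out (zero mean of the It\^o integrals, the favourable sign of the spatial boundary term via $\partial_\nu\eta\le 0$ on $\Gamma$, the fact that the stochastic correction contributes only $V^2=\theta^2Z^2$ and not $|\nabla V|^2$ to the right-hand side, the vanishing of the temporal boundary terms from the decay of $\theta$ at $t=0,T$, and the single integration by parts for $\nabla\cdot F$) are precisely the ones that make the estimate work with the stated powers of $\lambda$ and $\varphi$. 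One minor remark: your final cut-off step is redundant as written, since choosing $\mathcal{O}'\Subset\mathcal{O}$ already gives $\int_{\mathcal{O}'}\le\int_{\mathcal{O}}$ for the zeroth-order local term; the cut-off/Caccioppoli argument is only genuinely needed if the pointwise identity leaves a local term in $|\nabla v|^2$ to be traded for a local term in $v^2$.
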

	Combining Carleman estimates \eqref{carfor5.6} and \eqref{carback5.8}, we  will first prove the following Carleman estimate for the coupled system \eqref{ADJSO1}.
	\begin{lm}\label{thmm5.1} 
		Let $((\phi,\Phi), \psi^1, \psi^2)$ be the solution of \eqref{ADJSO1} and assume that \eqref{Assump10} holds. Then, there exist a large $\mu_3>1$ such that for $\mu=\mu_3$, one can find constants $C>0$ and $\lambda_3>1$ depending only on $G$, $\mathcal{O}$, $\mu_3$ and $T$ such that for all $\lambda\geq\lambda_3$
		\begin{align}\label{Carlem5.9}
			\begin{aligned}
				&\,\lambda^3\mathbb{E}\int_Q\theta^2\varphi^3 \phi^2\,dx\,dt+\lambda\mathbb{E}\int_Q \theta^2\varphi|\nabla \phi|^2\,dx\,dt\\
				&+\lambda^3\mathbb{E}\int_Q\theta^2\varphi^3 h^2\,dx\,dt+\lambda\mathbb{E}\int_Q \theta^2\varphi|\nabla h|^2\,dx\,dt\\
				&\leq\,C \Bigg[\lambda^7  \mathbb{E}\int_{Q_0} \theta^2 \varphi^7 \phi^2 \, \,dx\,dt+\lambda^5 \mathbb{E}\int_{Q}\theta^2\varphi^5\Phi^2\,dx\,dt\Bigg],
			\end{aligned}
		\end{align}
		where $h= \alpha_1\psi^1+\alpha_2\psi^2$.
	\end{lm}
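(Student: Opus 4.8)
The plan is to combine the basic Carleman estimates \eqref{carfor5.6} and \eqref{carback5.8}, applied respectively to the forward component $h$ and to the backward component $(\phi,\Phi)$ of \eqref{ADJSO1}, and then to remove the unwanted terms — above all a local space--time integral of $h$ over $\mathcal{O}$ — by a local estimate that exploits the coupling together with the largeness of $\beta_i$. As a preliminary, by \eqref{Assump10} the set $\mathcal{O}\cap\mathcal{O}_d$ is a nonempty open subset of $G$, so I pick the function $\eta$ of Lemma \ref{lmm5.1} associated with some $\mathcal{O}'\Subset\mathcal{O}\cap\mathcal{O}_d$ and build $\alpha,\varphi,\theta$ from it; then \eqref{carfor5.6} and \eqref{carback5.8} hold with the observation set $Q_0$ replaced by the smaller $(0,T)\times\mathcal{O}'$. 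The crucial algebraic remark is that, by \eqref{Assump10}, the source in the $\phi$-equation of \eqref{ADJSO1} is exactly $\sum_{i=1}^2\alpha_i\psi^i\mathbbm{1}_{\mathcal{O}_{i,d}}=(\alpha_1\psi^1+\alpha_2\psi^2)\mathbbm{1}_{\mathcal{O}_d}=h\,\mathbbm{1}_{\mathcal{O}_d}$, while $h=\alpha_1\psi^1+\alpha_2\psi^2$ itself solves the forward equation
\[
\begin{cases}
dh-\Delta h\,dt=[a_1h+B_1\cdot\nabla h+b\,\phi]\,dt+[a_2h+B_2\cdot\nabla h]\,dW(t)&\textnormal{in }Q,\\
h=0&\textnormal{on }\Sigma,\\
h(0)=0&\textnormal{in }G,
\end{cases}
\]
where $b:=\sum_{i=1}^2(\alpha_i/\beta_i)\mathbbm{1}_{\mathcal{O}_i}$, so that $|b|_\infty\leq C\overline{\beta}^{-1}$ is small when the $\beta_i$ are large.

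Next I would apply \eqref{carback5.8} to $(\phi,\Phi)$ with $F_3=-a_1\phi-a_2\Phi+h\mathbbm{1}_{\mathcal{O}_d}$, $F=\phi B_1+\Phi B_2$, $Z=\Phi$, and \eqref{carfor5.6} to $h$, and add the two inequalities. Writing
\[
I:=\lambda^3\mathbb{E}\int_Q\theta^2\varphi^3(\phi^2+h^2)\,dx\,dt+\lambda\mathbb{E}\int_Q\theta^2\varphi(|\nabla\phi|^2+|\nabla h|^2)\,dx\,dt,
\]
and absorbing into $I$ the lower- and first-order contributions (using $\varphi\geq C$, $\lambda$ large, the boundedness of $a_i,B_i$, and, for the term carrying $b^2\phi^2$, the largeness of $\beta_i$) while collecting the various $\Phi$-contributions into a single term $\lambda^5\mathbb{E}\int_Q\theta^2\varphi^5\Phi^2\,dx\,dt$, one is left with
\[
I\leq C\Big[\lambda^3\mathbb{E}\int_{(0,T)\times\mathcal{O}'}\theta^2\varphi^3(\phi^2+h^2)\,dx\,dt+\lambda^5\mathbb{E}\int_Q\theta^2\varphi^5\Phi^2\,dx\,dt\Big].
\]
It then suffices to dominate the unwanted local term $\lambda^3\mathbb{E}\int_{(0,T)\times\mathcal{O}'}\theta^2\varphi^3h^2\,dx\,dt$.

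The heart of the proof is the following local estimate: for every $\delta>0$, with $\mathcal{O}'\Subset\mathcal{O}''\Subset\mathcal{O}\cap\mathcal{O}_d$ and a cut-off $\xi\in C^\infty_c(\mathcal{O}'')$, $0\leq\xi\leq1$, $\xi\equiv1$ on $\mathcal{O}'$ (so that $\xi\mathbbm{1}_{\mathcal{O}_d}=\xi$),
\[
\lambda^3\mathbb{E}\int_{(0,T)\times\mathcal{O}'}\theta^2\varphi^3h^2\,dx\,dt\leq\delta\,I+C_\delta\lambda^7\mathbb{E}\int_{(0,T)\times\mathcal{O}''}\theta^2\varphi^7\phi^2\,dx\,dt+C_\delta\lambda^5\mathbb{E}\int_Q\theta^2\varphi^5\Phi^2\,dx\,dt.
\]
To get it I would bound the left-hand side by $\lambda^3\mathbb{E}\int_Q\xi\theta^2\varphi^3\,h\,(h\mathbbm{1}_{\mathcal{O}_d})$ and substitute $h\mathbbm{1}_{\mathcal{O}_d}$ from the $\phi$-equation, namely $h\mathbbm{1}_{\mathcal{O}_d}\,dt=d\phi+\Delta\phi\,dt+a_1\phi\,dt+a_2\Phi\,dt-\nabla\cdot(\phi B_1+\Phi B_2)\,dt-\Phi\,dW(t)$. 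Upon taking expectation the stochastic-integral term drops, and the $d\phi$-contribution is rewritten via Itô's formula for $\xi\theta^2\varphi^3h\phi$ on $[0,T]$ — the boundary terms vanishing because $h(0)=0$ and $\theta^2\varphi^k\to0$ as $t\to T$ — in terms of the drift of $d(\xi\theta^2\varphi^3h)$ (which brings $\partial_t(\xi\theta^2\varphi^3)h$, $\xi\theta^2\varphi^3\Delta h$, $\xi\theta^2\varphi^3(a_1h+B_1\cdot\nabla h)$ and $\xi\theta^2\varphi^3b\phi$) and of the cross-variation $\xi\theta^2\varphi^3(a_2h+B_2\cdot\nabla h)\Phi$. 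Integrating by parts in $x$ in the two second-order terms $\xi\theta^2\varphi^3\phi\Delta h$ and $\xi\theta^2\varphi^3h\Delta\phi$ makes the $\xi\theta^2\varphi^3\nabla h\cdot\nabla\phi$ contributions cancel, and what remains are products of $h$, $\nabla h$ or $\Phi$ with $\phi$ or $\nabla\phi$ times derivatives of the weight, the latter bounded by suitable powers of $\lambda\varphi$. A careful Cauchy--Schwarz — arranging each $h$- and $\nabla h$-factor to carry the weights $\lambda^{3/2}\theta\varphi^{3/2}$, resp. $\lambda^{1/2}\theta\varphi^{1/2}$, already present in $I$ — then bounds everything by $\delta I$, plus local integrals of $\phi$ and $\nabla\phi$ with weights up to $\lambda^7\theta^2\varphi^7$ and $\lambda^5\theta^2\varphi^5$, plus a global $\Phi$-term with weight $\lambda^5\theta^2\varphi^5$, plus the single global term $\lambda^3\mathbb{E}\int_Q\xi\theta^2\varphi^3b\phi^2\leq C\overline{\beta}^{-1}\lambda^3\mathbb{E}\int_Q\theta^2\varphi^3\phi^2$, which is absorbed into $I$ for the $\beta_i$ large. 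The local $\nabla\phi$-term is then removed by a Caccioppoli-type inequality for the $\phi$-equation — obtained by applying Itô to $\zeta^2\theta^2\varphi^5\phi^2$ with a further cut-off $\zeta$ supported in a set $\Subset\mathcal{O}\cap\mathcal{O}_d$ and $\equiv1$ on $\mathcal{O}''$, integrating by parts the $\Delta\phi$-term and absorbing the arising $\nabla\phi$-contribution — which bounds $\mathbb{E}\int_{(0,T)\times\mathcal{O}''}\theta^2\varphi^5|\nabla\phi|^2$ by $C\lambda^2$ times a local integral of $\theta^2\varphi^7\phi^2$ plus lower-order remainders controlled as above. Plugging this in and absorbing $\delta I$ into the left-hand side of $I\leq\cdots$ for $\delta$ small yields \eqref{Carlem5.9}, after enlarging the observation set $\mathcal{O}''$ to $\mathcal{O}$ and $\lambda_3$ if needed.

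The main obstacle is this last local estimate: one must chain several integrations by parts and Itô expansions while tracking every power of $\lambda$ and $\varphi$, so that each $h$- and $\nabla h$-term re-emerges with a coefficient proportional to $\delta$ times the corresponding term of $I$, each $\phi$-term is localized on $\mathcal{O}$ with precisely the weight $\varphi^7$, and each $\Phi$-term carries the weight $\varphi^5$; it is here, through the smallness $|b|_\infty\leq C\overline{\beta}^{-1}$ used to kill the leftover global $\phi$-term, that the hypothesis that the $\beta_i$ be sufficiently large is genuinely needed.
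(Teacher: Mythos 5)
Your proposal is correct and follows essentially the same route as the paper: combine the forward Carleman estimate for $h$ with the backward one for $(\phi,\Phi)$, then eliminate the local $h$-integral by an It\^o/duality computation of $d(\zeta\theta^2\varphi^3 h\phi)$ with a cut-off supported in $\mathcal{O}\cap\mathcal{O}_d$, exploiting that under \eqref{Assump10} the coupling term in the $\phi$-equation is exactly $h\mathbbm{1}_{\mathcal{O}_d}$; the resulting weights $\lambda^7\varphi^7$ on the local $\phi$-term and $\lambda^5\varphi^5$ on the global $\Phi$-term match the paper's. The only cosmetic differences are that the paper integrates by parts once more so that no local $\nabla\phi$-term ever appears (hence no Caccioppoli-type inequality is needed), and that the leftover $b\phi^2$-contributions are either local in $\mathcal{O}$ (absorbed into the observation term) or global but dominated by $\lambda^3\varphi^3\phi^2$ for $\lambda$ large, so the largeness of the $\beta_i$ is not actually needed at this stage, consistent with the lemma's statement.
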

	\begin{proof}
		Let us choose the set $\mathcal{O}'$ defined in Lemma \ref{lmm5.1} so that 
		\begin{equation*}
			\mathcal{O}'\subset \mathcal{O}'_1\Subset \mathcal{O}\cap \mathcal{O}_d,
		\end{equation*}
		where $\mathcal{O}'_1$ is another arbitrary open subset. We also consider $\zeta\in C^{\infty}(G)$ such that
		\begin{subequations}\label{assmzeta}
			\begin{align}
				&0\leq\zeta\leq 1,\,\quad \zeta =1 \,\, \text{in}\,\, \mathcal{O}',\quad \,\text{supp}(\zeta)\subset \mathcal{O}'_1,\label{assmzeta1}\\ &\quad
				\frac{\Delta\zeta}{\zeta^{1/2}}\in L^\infty(G),\quad
				\frac{\nabla\zeta}{\zeta^{1/2}}\in L^\infty(G;\mathbb{R}^N)\label{assmzeta2}.
			\end{align}
		\end{subequations}
		Such function $\zeta$ exists. In fact, by standard arguments, one can take $\zeta_0\in C^\infty_0(G)$ satisfying \eqref{assmzeta1} and then choose $\zeta=\zeta_0^4$, see, e.g., \cite{HSP18}.
		Firstly, by applying the Carleman estimate \eqref{carfor5.6} for solutions of the system satisfied by $h$, we conclude that there exist a large $\mu_1>1$ such that for $\mu\geq\mu_1$, one can find a constant $C>0$ and a large enough $\lambda_1>1$ so that for all $\lambda\geq\lambda_1$, we obtain
		\begin{align*}
			&\,\lambda^3\mathbb{E}\int_Q\theta^2\varphi^3 h^2\,dx\,dt+\lambda\mathbb{E}\int_Q \theta^2\varphi|\nabla h|^2\,dx\,dt \\ &\leq C\Bigg[\lambda^3 \mathbb{E}\int_0^T\int_{\mathcal{O}'} \theta^2\varphi^3 h^2 \, dx\,dt+\mathbb{E}\int_{Q}\theta^2\Big|\frac{\alpha_1}{\beta_1}\mathbbm{1}_{\mathcal{O}_1}(x)+\frac{\alpha_2}{\beta_2}\mathbbm{1}_{\mathcal{O}_2}(x)\Big|^2\phi^2 \,dx\,dt\Bigg],
		\end{align*}
		which gives that
		\begin{align}\label{Car4.13}
			\begin{aligned}
				&\,\lambda^3\mathbb{E}\int_Q\theta^2\varphi^3 h^2\,dx\,dt+\lambda\mathbb{E}\int_Q \theta^2\varphi|\nabla h|^2\,dx\,dt\\
				&\leq C\Bigg[\lambda^3\mathbb{E}\int_0^T\int_{\mathcal{O}'} \theta^2\varphi^3 h^2 \, dx\,dt
				+ \mathbb{E}\int_{Q} \theta^2\phi^2\, dx\,dt\Bigg].
			\end{aligned}
		\end{align}
		Secondly, using the Carleman estimate \eqref{carback5.8}, we deduce that there exists $\mu_2>1$ such that for $\mu\geq\mu_2$, one can find a constant $C > 0$ and a sufficiently large $\lambda_2>1$ such that for all $\lambda \geq \lambda_2$ 
		\begin{align}\label{car4.14}
			\begin{aligned}
				&\,\lambda^3\mathbb{E}\int_Q\theta^2\varphi^3 \phi^2\,dx\,dt+\lambda\mathbb{E}\int_Q \theta^2\varphi|\nabla \phi|^2\,dx\,dt\\
				&\leq C\Bigg[\lambda^3  \mathbb{E}\int_0^T\int_{\mathcal{O}'} \theta^2\varphi^3 \phi^2 \, dx\,dt+\mathbb{E}\int_{Q} \theta^2 h^2 \, dx\,dt+\lambda^2\mathbb{E}\int_{Q} \theta^2\varphi^2 \Phi^2\,dx\,dt\Bigg].
			\end{aligned}
		\end{align}
		Combining \eqref{Car4.13}-\eqref{car4.14} and taking  $\mu=\mu_3=\max(\mu_1,\mu_2)$ and a large enough $\lambda>1$, we have that
		\begin{align}\label{firsine1}
			\begin{aligned}
				&\,\lambda^3\mathbb{E}\int_Q\theta^2\varphi^3 \phi^2\,dx\,dt+\lambda\mathbb{E}\int_Q \theta^2\varphi|\nabla \phi|^2\,dx\,dt\\
				&+\lambda^3\mathbb{E}\int_Q\theta^2\varphi^3 h^2\,dx\,dt+\lambda\mathbb{E}\int_Q \theta^2\varphi|\nabla h|^2\,dx\,dt\\
				&\leq C\Bigg[ \lambda^3 \mathbb{E}\int_0^T\int_{\mathcal{O}'} \theta^2\varphi^3 \phi^2 \,dx\,dt +\lambda^3 \mathbb{E}\int_0^T\int_{\mathcal{O}'} \theta^2\varphi^3 h^2 \, dx\,dt\\
				&\qquad\;+\lambda^2\mathbb{E}\int_{Q}\theta^2\varphi^2 \Phi^2\,dx\,dt\Bigg].
			\end{aligned}
		\end{align}
		Following this, we shall provide an estimate for the second term on the right-hand side of \eqref{firsine1}. To this end, using Itô's formula, we compute $d(\lambda^3\zeta \theta^2\varphi^3 h\phi)$, integrate the resulting identity over $Q$, and take the mathematical expectation on both sides. This leads us to
		\begin{align*}
			\begin{aligned}
				0&=\mathbb{E}\int_Q d(\lambda^3\zeta \theta^2\varphi^3 h\phi) \,dx\\
				&=\mathbb{E}\int_Q \lambda^3\zeta\theta^2\varphi^3\Bigg\{\phi\Big[\Delta h+a_1h+B_1\cdot\nabla h+\Big(\frac{\alpha_1}{\beta_1}\mathbbm{1}_{\mathcal{O}_1}(x)+\frac{\alpha_2}{\beta_2}\mathbbm{1}_{\mathcal{O}_2}(x)\Big)\phi\Big]+a_2h\Phi\\
				&\quad\quad\;+\Phi B_2\cdot\nabla h+h\big[-\Delta\phi-a_1\phi-a_2\Phi+\nabla\cdot(\phi B_1+\Phi B_2)+h\mathbbm{1}_{\mathcal{O}_d}(x)\big]\Bigg\}\, dx\,dt\\
				&\quad+\mathbb{E}\int_Q (\lambda^3\zeta\theta^2\varphi^3)_th\phi \,dx\,dt\\
				&=\mathbb{E}\int_Q \lambda^3\zeta\theta^2\varphi^3\Bigg\{\phi\Big[\Delta h+B_1\cdot\nabla h+\Big(\frac{\alpha_1}{\beta_1}\mathbbm{1}_{\mathcal{O}_1}(x)+\frac{\alpha_2}{\beta_2}\mathbbm{1}_{\mathcal{O}_2}(x)\Big)\phi\Big]+\Phi B_2\cdot\nabla h\\
				&\quad+h\big[-\Delta\phi+\nabla\cdot(\phi B_1+\Phi B_2)+h\mathbbm{1}_{\mathcal{O}_d}(x)\big]\Bigg\}\, dx\,dt+\mathbb{E}\int_Q (\lambda^3\zeta\theta^2\varphi^3)_th\phi \,dx\,dt.
			\end{aligned}
		\end{align*}
		Therefore, it follows that 
		\begin{align}\label{integ11}
			\begin{aligned}
				\lambda^3\mathbb{E}\int_0^T\int_{\mathcal{O}'} \theta^2\varphi^3 h^2 \,dx\,dt&\leq\lambda^3\mathbb{E}\int_0^T\int_{\mathcal{O}'_1} \zeta \theta^2\varphi^3 h^2 \,dx\,dt \\&=-\lambda^3\mathbb{E}\int_Q \zeta \theta^2\varphi^3\Big(\frac{\alpha_1}{\beta_1}\mathbbm{1}_{\mathcal{O}_1}(x)+\frac{\alpha_2}{\beta_2}\mathbbm{1}_{\mathcal{O}_2}(x)\Big)\phi^2 \,dx\,dt\\
				&\quad\;-\lambda^3\mathbb{E}\int_Q \zeta(\theta^2\varphi^3)_t h\phi \,dx\,dt-\lambda^3\mathbb{E}\int_Q \zeta \theta^2\varphi^3 \phi B_1\cdot\nabla h \,dx\,dt\\
				&\quad\;-\lambda^3\mathbb{E}\int_Q \zeta \theta^2\varphi^3 \Phi B_2\cdot\nabla h \,dx\,dt-\lambda^3\mathbb{E}\int_Q \zeta \theta^2\varphi^3 h\nabla\cdot(\phi B_1+\Phi B_2)\,dx\,dt\\
				&\quad\;-\lambda^3\mathbb{E}\int_Q \zeta \theta^2\varphi^3\phi \Delta h \,dx\,dt+\lambda^3\mathbb{E}\int_Q \zeta \theta^2\varphi^3h\Delta\phi\,dx\,dt.
			\end{aligned}
		\end{align}
		Integrating by parts the fifth term on the right-hand side of \eqref{integ11}, we find that
		\begin{align}\label{integ1101}
			\begin{aligned}
				\lambda^3\mathbb{E}\int_0^T\int_{\mathcal{O}'_1} \zeta \theta^2\varphi^3 h^2 \,dx\,dt &=-\lambda^3\mathbb{E}\int_Q \zeta \theta^2\varphi^3\left[\frac{\alpha_1}{\beta_1}\mathbbm{1}_{\mathcal{O}_1}(x)+\frac{\alpha_2}{\beta_2}\mathbbm{1}_{\mathcal{O}_2}(x)\right]\phi^2 \,dx\,dt\\
				&\quad\;-\lambda^3\mathbb{E}\int_Q \zeta(\theta^2\varphi^3)_t h\phi \,dx\,dt+\lambda^3\mathbb{E}\int_Q \phi h B_1\cdot\nabla(\zeta \theta^2\varphi^3) \,dx\,dt\\
				&\quad\;+\lambda^3\mathbb{E}\int_Q \Phi h B_2\cdot\nabla(\zeta \theta^2\varphi^3) \,dx\,dt-\lambda^3\mathbb{E}\int_Q \zeta \theta^2\varphi^3\phi \Delta h \,dx\,dt\\
				&\quad\;+\lambda^3\mathbb{E}\int_Q \zeta \theta^2\varphi^3h\Delta\phi\,dx\,dt\\
				&:=\sum_{i=1}^6\textbf{I}_i.
			\end{aligned}
		\end{align}
		In the rest of the proof,  fixing $\varepsilon>0$, it is straightforward to see that
		\begin{align}\label{Int2}
			\textbf{I}_1\leq C\lambda^3\mathbb{E}\int_{Q_0}\theta^2\varphi^3\phi^2 \,dx\,dt.
		\end{align}
		Using \eqref{aligned123}, it is easy to see that for a large $\lambda$
		\begin{align}\label{estttheva}
			|(\theta^2\varphi^3)_t|\leq C\lambda\theta^2\varphi^5,
		\end{align}
		which with Young's inequality yield
		\begin{align}\label{Int4}
			\begin{aligned}
				\textbf{I}_2&\leq C\lambda^4\mathbb{E}\int_0^T\int_{\mathcal{O}_1'} \zeta\theta^2\varphi^5|h||\phi| \,dx\,dt\\
				&\leq \varepsilon\lambda^3\mathbb{E}\int_0^T\int_{\mathcal{O}_1'}\zeta \theta^2\varphi^3h^2 \,dx\,dt+C(\varepsilon)\lambda^5\mathbb{E}\int_{Q_0} \theta^2\varphi^7\phi^2 \,dx\,dt.
			\end{aligned}
		\end{align}
		For $I_3$, we have
		\begin{align}\label{414es}
			\textbf{I}_3=\lambda^3\mathbb{E}\int_Q \theta^2\varphi^3\phi h B_1\cdot\nabla\zeta \,dx\,dt+\lambda^3\mathbb{E}\int_Q \zeta\phi h B_1\cdot\nabla(\theta^2\varphi^3) \,dx\,dt. 
		\end{align}
		See that for a large $\lambda$
		\begin{align*}
			|\nabla(\theta^2\varphi^3)|\leq C\lambda\theta^2\varphi^4,
		\end{align*}
		and hence, with  \eqref{assmzeta} and \eqref{414es}, we obtain that
		$$\textbf{I}_3\leq C\lambda^3\mathbb{E}\int_Q \zeta^{1/2}\theta^2\varphi^3|\phi||h| \,dx\,dt+C\lambda^4\mathbb{E}\int_Q \zeta\theta^2\varphi^4 |\phi||h|\,dx\,dt.$$
		Therefore, it follows  that
		\begin{align}\label{estimI3}
			\begin{aligned}
				\textbf{I}_3&\leq \varepsilon\lambda^3\mathbb{E}\int_0^T\int_{\mathcal{O}_1'}\zeta \theta^2\varphi^3h^2 \,dx\,dt+C(\varepsilon)\lambda^3\mathbb{E}\int_{Q_0} \theta^2\varphi^3\phi^2 \,dx\,dt\\
				&\quad+C(\varepsilon)\lambda^5\mathbb{E}\int_{Q_0} \theta^2\varphi^5\phi^2 \,dx\,dt.
			\end{aligned}
		\end{align}
		Similarly for $\textbf{I}_4$, we find  
		\begin{align}\label{estimI4}
			\begin{aligned}
				\textbf{I}_4&\leq \varepsilon\lambda^3\mathbb{E}\int_0^T\int_{\mathcal{O}_1'}\zeta \theta^2\varphi^3h^2 \,dx\,dt+C(\varepsilon)\lambda^3\mathbb{E}\int_{Q} \theta^2\varphi^3\Phi^2 \,dx\,dt\\
				&\quad+C(\varepsilon)\lambda^5\mathbb{E}\int_{Q} \theta^2\varphi^5\Phi^2 \,dx\,dt.
			\end{aligned}
		\end{align}
		By integration by parts and \eqref{assmzeta}, we have that
		\begin{align}\label{ine5.144}
			\begin{aligned}
				\textbf{I}_5+\textbf{I}_6 &= -\lambda^3\mathbb{E}\int_Q \zeta \theta^2\varphi^3\phi \Delta h \,dx\,dt+\lambda^3\mathbb{E}\int_Q \zeta \theta^2\varphi^3h\Delta\phi\,dx\,dt\\
				&=\lambda^3\mathbb{E}\int_Q \Delta(\zeta\theta^2\varphi^3)h\phi\,dx\,dt + 2 \lambda^3\mathbb{E}\int_Q \phi\nabla h\cdot\nabla(\zeta\theta^2\varphi^3) \,dx\,dt\\
				&\leq C\lambda^3\mathbb{E}\int_0^T\int_{\mathcal{O}_1'} \zeta^{1/2}\theta^2\varphi^3|h||\phi|\,dx\,dt+C\lambda^3\mathbb{E}\int_0^T\int_{\mathcal{O}_1'} |\nabla(\theta^2\varphi^3)|\zeta^{1/2}|h||\phi|\,dx\,dt\\
				&\quad\,
				+C\lambda^3\mathbb{E}\int_0^T\int_{\mathcal{O}_1'} |\Delta(\theta^2\varphi^3)|\zeta|h||\phi|\,dx\,dt+C\lambda^3\mathbb{E}\int_0^T\int_{\mathcal{O}_1'} \theta^2\varphi^3|\phi||\nabla h|\,dx\,dt\\
				&\quad\,+C\lambda^3\mathbb{E}\int_0^T\int_{\mathcal{O}_1'} |\nabla(\theta^2\varphi^3)||\phi||\nabla h|\,dx\,dt.
			\end{aligned}
		\end{align}
		Notice that for a large $\lambda$
		\begin{align}\label{5.16inq}
			|\Delta(\theta^2\varphi^3)|\leq C\lambda^2\theta^2\varphi^5.
		\end{align}
		This with estimates above yield
		\begin{align}\label{ine5.16}
			\begin{aligned}
				\textbf{I}_5+\textbf{I}_6&\leq C\lambda^3\mathbb{E}\int_0^T\int_{\mathcal{O}_1'} \zeta^{1/2}\theta^2\varphi^3|h||\phi|\,dx\,dt+C\lambda^4\mathbb{E}\int_0^T\int_{\mathcal{O}_1'} \zeta^{1/2}\theta^2\varphi^4|h||\phi|\,dx\,dt\\
				&\quad+C\lambda^5\mathbb{E}\int_0^T\int_{\mathcal{O}_1'} \zeta\theta^2\varphi^5|h||\phi|\,dx\,dt+C\lambda^3\mathbb{E}\int_0^T\int_{\mathcal{O}_1'} \theta^2\varphi^3|\phi||\nabla h|\,dx\,dt\\
				&\quad+C\lambda^4\mathbb{E}\int_0^T\int_{\mathcal{O}_1'} \theta^2\varphi^4|\phi||\nabla h|\,dx\,dt. 
			\end{aligned}
		\end{align}
		Applying Young's inequality in  the right-hand side of \eqref{ine5.16}, we end up with
		\begin{align}\label{inti13}
			\begin{aligned}
				\textbf{I}_5+\textbf{I}_6&\leq\varepsilon\lambda^3\mathbb{E}\int_0^T\int_{\mathcal{O}_1'} \zeta\theta^2\varphi^3 h^2 dx\,dt+C(\varepsilon)\lambda^3\mathbb{E}\int_{Q_0} \theta^2\varphi^3 \phi^2 dx\,dt\\
				&\quad\,+C(\varepsilon)\lambda^5\mathbb{E}\int_{Q_0} \theta^2\varphi^5 \phi^2 dx\,dt+C(\varepsilon)\lambda^7\mathbb{E}\int_{Q_0} \theta^2\varphi^7 \phi^2 dx\,dt\\
				& \quad\,+\varepsilon\lambda\mathbb{E}\int_Q \theta^2\varphi |\nabla h|^2 dx\,dt.
			\end{aligned}
		\end{align}
		From \eqref{integ11}, \eqref{integ1101}, \eqref{Int2}, \eqref{Int4}, \eqref{estimI3}, \eqref{estimI4} and \eqref{inti13}, we deduce for  large $\lambda$
		\begin{align}\label{Abss11}
			\begin{aligned}
				\lambda^3\mathbb{E}\int_0^T\int_{\mathcal{O}'} \theta^2\varphi^3h^2 \,dx\,dt&\leq 4\varepsilon \lambda^3\mathbb{E}\int_Q \theta^2\varphi^3h^2 \,dx\,dt+C(\varepsilon)\lambda^7\mathbb{E}\int_{Q_0}\theta^2\varphi^7\phi^2 \,dx\,dt\\ 
				&\quad\,+C(\varepsilon)\lambda^5\mathbb{E}\int_{Q}\theta^2\varphi^5\Phi^2 \,dx\,dt+\varepsilon\lambda\mathbb{E}\int_Q \theta^2\varphi |\nabla h|^2 \,dx\,dt.
			\end{aligned}
		\end{align}
		Combining \eqref{firsine1} and \eqref{Abss11} and choosing  a small enough $\varepsilon>0$, we obtain that
		\begin{align}\label{in5.211}
			\begin{aligned}
				&\,\lambda^3\mathbb{E}\int_Q\theta^2\varphi^3 \phi^2\,dx\,dt+\lambda\mathbb{E}\int_Q \theta^2\varphi|\nabla \phi|^2\,dx\,dt\\
				&+\lambda^3\mathbb{E}\int_Q\theta^2\varphi^3 h^2\,dx\,dt+\lambda\mathbb{E}\int_Q \theta^2\varphi|\nabla h|^2\,dx\,dt\\ 
				&\leq C\Bigg[ \lambda^3 \mathbb{E}\int_{Q_0} \theta^2\varphi^3 \phi^2 \,dx\,dt +\lambda^7 \mathbb{E}\int_{Q_0} \theta^2\varphi^7 \phi^2 \, dx\,dt\\
				&\quad\;\;\quad+\lambda^2\mathbb{E}\int_{Q}\theta^2\varphi^2 \Phi^2\,dx\,dt+\lambda^5\mathbb{E}\int_{Q}\theta^2\varphi^5 \Phi^2\,dx\,dt\Bigg].
			\end{aligned}
		\end{align}
		Finally, by choosing a sufficiently large $\lambda$ in \eqref{in5.211}, we deduce the desired Carleman estimate \eqref{Carlem5.9}. This concludes the proof of Lemma \ref{thmm5.1}.
	\end{proof}

	To incorporate the influence of source terms $y_{1,d}$ and $y_{2,d}$ in the optimality system \eqref{eqq4.7}, we show an improved Carleman inequality. For this purpose, we first introduce the following modified weight functions
	
	\begin{align}\label{rec1}
		&\,\overline{\alpha}=\overline{\alpha}(t,x) = \frac{e^{\mu\eta(x)}-e^{2\mu\vert\eta\vert_\infty}}{\ell(t)},\qquad \overline{\varphi}=\overline{\varphi}(t,x) =\frac{e^{\mu\eta(x)}}{\ell(t)},\qquad\overline{\theta}=e^{\lambda\overline{\alpha}},
	\end{align}
	where
	\begin{equation*}\label{eq:adjoint-system} \ell(t)=\begin{cases}
			\begin{array}{ll}
				T^2/4 &\textnormal{for}\;\; 0\leq t\leq T/2,\\
				t(T-t) &\textnormal{for}\;\; T/2\leq t\leq T,
			\end{array}
		\end{cases}
	\end{equation*}
	and the functions
	\begin{equation}\label{rec2}
		\overline{\alpha}^{\star}(t)=\min_{x\in\overline{G}}\overline{\alpha}(t,x)  \quad \text{and} \quad  	\overline{\varphi}^{\star}(t)=\max_{x\in\overline{G}}\overline{\varphi}(t,x).
	\end{equation}
	
	Let us now prove the following main improved  Carleman estimate for the coupled system \eqref{ADJSO1}.
	\begin{thm}\label{lem4.5st} Let $((\phi,\Phi), \psi^1, \psi^2)$ be the solution of \eqref{ADJSO1} and assume that \eqref{Assump10} holds. Then, for $\mu=\mu_3$ given in Lemma \ref{thmm5.1}, there exist constants $C>0$ and $\lambda_4>1$ depending only on $G$, $\mathcal{O}$, $\mu_3$ and $T$ such that for all $\lambda\geq\lambda_4$
		\begin{align}\label{improvedCarl}
			\begin{aligned}
				&\,\mathbb{E} |\phi(0)|^2_{L^2(G)}+ \mathbb{E}\int_Q\overline{\theta}^2\overline{\varphi}^3 \phi^2\,dx\,dt+\mathbb{E}\int_Q\overline{\theta}^2\overline{\varphi} |\nabla\phi|^2\,dx\,dt\\
				&+ \mathbb{E}\int_Q\theta^2\varphi^3 h^2\,dx\,dt+\mathbb{E}\int_Q \theta^2\varphi|\nabla h|^2\,dx\,dt\\
				&\leq C \Bigg[\lambda^7  \mathbb{E}\int_{Q_0} \theta^2 \varphi^7 \phi^2 \, \,dx\,dt+\lambda^5 \mathbb{E}\int_{Q}\theta^2\varphi^5\Phi^2\,dx\,dt\Bigg],
			\end{aligned}
		\end{align}
		where $h= \alpha_1\psi^1+\alpha_2\psi^2$.
	\end{thm}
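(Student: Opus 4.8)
The plan is to deduce \eqref{improvedCarl} from the Carleman estimate \eqref{Carlem5.9} of Lemma \ref{thmm5.1} by means of energy (dissipation) estimates for the components of \eqref{ADJSO1}; the point is to upgrade the weight on $\phi$ near $t=0$ — where $\theta,\varphi$ have degenerated — and to produce the term $\mathbb{E}|\phi(0)|_{L^2(G)}^2$. Write $\mathcal{R}:=\lambda^7\mathbb{E}\int_{Q_0}\theta^2\varphi^7\phi^2\,dx\,dt+\lambda^5\mathbb{E}\int_{Q}\theta^2\varphi^5\Phi^2\,dx\,dt$ for the right-hand side of \eqref{improvedCarl}. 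Since $\ell(t)=t(T-t)$ on $[T/2,T]$, one has $\overline{\alpha}=\alpha$, $\overline{\varphi}=\varphi$, $\overline{\theta}=\theta$ on $(T/2,T)\times G$, so the contribution of $\phi$ and $\nabla\phi$ over $(T/2,T)$ to the left-hand side of \eqref{improvedCarl}, together with the full $h$-terms over $Q$, is already bounded by $C\mathcal{R}$ through \eqref{Carlem5.9} (the positive powers of $\lambda$ multiplying these terms in \eqref{Carlem5.9} are $\geq 1$ and may be discarded). On the other hand, $\ell\equiv T^2/4$ on $[0,T/2]$, so there $\overline{\theta}^2\overline{\varphi}^3$ and $\overline{\theta}^2\overline{\varphi}$ are bounded above by a constant. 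Hence it suffices to prove the reduced estimate
\[
\mathbb{E}|\phi(0)|_{L^2(G)}^2+\mathbb{E}\int_0^{T/2}\int_G\big(\phi^2+|\nabla\phi|^2\big)\,dx\,dt\ \leq\ C\,\mathcal{R}.
\]

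To this end I would first establish an energy estimate for the backward equation in \eqref{ADJSO1}. Applying Itô's formula to $t\mapsto\mathbb{E}|\phi(t)|_{L^2(G)}^2$, integrating by parts the term $\nabla\cdot(\phi B_1+\Phi B_2)$, using assumption \eqref{Assump10} (so that the coupling source in the $\phi$-equation is $\mathbbm{1}_{\mathcal{O}_d}h$ with $h=\alpha_1\psi^1+\alpha_2\psi^2$), Young's inequality and Gronwall's inequality, one obtains, for all $0\leq t\leq s\leq T$,
\[
\mathbb{E}|\phi(t)|_{L^2(G)}^2+\mathbb{E}\int_t^s\int_G\big(|\nabla\phi|^2+\Phi^2\big)\,dx\,dr\ \leq\ C\,\mathbb{E}|\phi(s)|_{L^2(G)}^2+C\,\mathbb{E}\int_t^s\int_G h^2\,dx\,dr.
\]
Taking $t=T/2$ here and averaging over $s\in[T/2,3T/4]$ gives $\mathbb{E}|\phi(T/2)|_{L^2(G)}^2\leq C\,\mathbb{E}\int_{T/2}^{3T/4}\int_G(\phi^2+h^2)\,dx\,dr$; since $\theta^2\varphi^3$ is bounded below by a positive constant on $[T/2,3T/4]\times\overline{G}$, the last integral is $\leq C\mathcal{R}$ by \eqref{Carlem5.9}, whence $\mathbb{E}|\phi(T/2)|_{L^2(G)}^2\leq C\mathcal{R}$. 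Taking instead $t\in[0,T/2]$, $s=T/2$, integrating in $t$ over $[0,T/2]$ and inserting the bound just obtained, we arrive at
\[
\mathbb{E}|\phi(0)|_{L^2(G)}^2+\mathbb{E}\int_0^{T/2}\int_G\big(\phi^2+|\nabla\phi|^2\big)\,dx\,dt\ \leq\ C\mathcal{R}+C\,\mathbb{E}\int_0^{T/2}\int_G h^2\,dx\,dt.
\]

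The remaining task is to absorb the last term, and this is where the smallness of the followers' feedback enters. For the forward equations satisfied by $\psi^1,\psi^2$ in \eqref{ADJSO1}, with $\psi^i(0)=0$ and source $\tfrac1{\beta_i}\mathbbm{1}_{\mathcal{O}_i}\phi$, the standard energy estimate yields $\mathbb{E}|\psi^i(t)|_{L^2(G)}^2\leq\frac{C}{\beta_i^2}\,\mathbb{E}\int_0^t\int_G\phi^2\,dx\,dr$ for $t\in[0,T]$; hence, with $\beta:=\min(\beta_1,\beta_2)$,
\[
\mathbb{E}\int_0^{T/2}\int_G h^2\,dx\,dt\ \leq\ \frac{C}{\beta^2}\,\mathbb{E}\int_0^{T/2}\int_G\phi^2\,dx\,dt.
\]
Inserting this into the previous inequality and choosing $\beta$ large (licit, since the $\beta_i$ are assumed large) so that the resulting coefficient of $\mathbb{E}\int_0^{T/2}\int_G\phi^2$ is at most $1/2$, that term is absorbed by the left-hand side. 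This proves the reduced estimate, hence \eqref{improvedCarl}.

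The genuinely delicate point is this last absorption: on $(0,T/2)$ the Carleman weights are degenerate, so $\phi$ can only be controlled there through the backward energy estimate, which feeds on $\mathbb{E}\int_0^{T/2}\int_G h^2$; in turn $h$ — driven by $\phi$ from zero initial data — feeds back on $\mathbb{E}\int_0^{T/2}\int_G\phi^2$, and this loop closes only because the feedback carries the small factor $1/\beta_i$, i.e. because $\beta_i$ is large (the same mechanism as in Proposition \ref{propp4.1}). Apart from this, the only care needed is the bookkeeping of the powers of $\lambda$ so that the final constant has the announced form; this is handled exactly as in the proof of Lemma \ref{thmm5.1}, absorbing lower powers of $\lambda$ into the higher powers $\lambda^7,\lambda^5$ on the right-hand side for $\lambda$ sufficiently large.
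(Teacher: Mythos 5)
Your proposal is correct and follows essentially the same route as the paper: split $Q$ at $t=T/2$, use that $\overline{\theta},\overline{\varphi}$ coincide with $\theta,\varphi$ on $(T/2,T)$ and are bounded on $(0,T/2)$, control $\phi(0)$ and the $(0,T/2)$ contributions by an energy estimate for the backward component, bound the resulting $h$-term via the forward $\psi^i$-equations (zero initial data, source $\tfrac1{\beta_i}\mathbbm{1}_{\mathcal{O}_i}\phi$), and absorb it using the largeness of the $\beta_i$ before invoking \eqref{Carlem5.9}. The only cosmetic difference is that the paper localizes with a cutoff $\kappa$ (system \eqref{ADJSO}) where you use a two-time energy identity plus averaging over $s\in[T/2,3T/4]$ — equivalent devices.
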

	\begin{proof}
		Let us consider the function $\kappa\in C^1([0,T])$ such that
		\begin{equation}\label{kappadef}
			\kappa=1\quad \text{in}\,\; [0,T/2],\quad \ \,\,\kappa= 0\quad \text{in} \,\,\;[3T/4, T]\quad \text{and} \,\,\;\;\kappa_t\leq C/T^2.
		\end{equation}
		Set $(p,P)=\kappa(\phi,\Phi)$, then $(p,P)$ is the solution of the system 
		\begin{equation}\label{ADJSO}
			\begin{cases}
				dp+\Delta p \,dt=\left[-a_1 p-a_2 P+\nabla\cdot(pB_1+PB_2)+\kappa h\mathbbm{1}_{\mathcal{O}_d}(x)+ \kappa_t\phi\right] \,dt+P \,dW(t) & \textnormal{in } Q, \\ 
				p=0 & \textnormal{on } \Sigma, \\ 
				p(T)=0 & \textnormal{in } G .
			\end{cases}
		\end{equation}
		From \eqref{kappadef} and \eqref{ADJSO}, there exists a constant $C>0$ so that  \begin{align}\label{energy}
			\begin{aligned}
				&\,\mathbb{E}|\phi(0)|^2_{L^2(G)}+ |\phi|^2_{L_{\mathcal{F}}^2\left(0, T/2 ; L^2(G)\right)}+ |\nabla\phi|^2_{L_{\mathcal{F}}^2\left(0, T/2 ; L^2(G;\mathbb{R}^N)\right)}\\
				&\leq C\left[\frac{1}{T^2}|\phi|^2_{L_{\mathcal{F}}^2(T/2, 3T/4 ; L^2(G))} +|h|^2_{L_{\mathcal{F}}^2(0, 3T/4 ; L^2(G))} \right].
			\end{aligned}
		\end{align}
		Since  the functions $\overline{\theta}$ and $\overline{\varphi}$ (resp., $\theta$ and $\varphi$ ) are bounded in $(0, T/2)\times\overline{G}$ (resp., $(T/2, 3T/4)\times\overline{G}$), we conclude that
		\begin{align*}
			\begin{aligned}
				&\,\mathbb{E}|\phi(0)|^2_{L^2(G)}+\mathbb{E}\int_0^{T/2}\int_G\overline{\theta}^2\overline{\varphi}^3\phi^2  \,dx\,dt+\mathbb{E}\int_0^{T/2}\int_G\overline{\theta}^2\overline{\varphi} |\nabla\phi|^2  \,dx\,dt\\ 
				&\leq C\left[\mathbb{E}\int_{T/2}^{3T/4}\int_G  \theta^2\varphi^3\phi^2 dx\,dt +  \mathbb{E}\int_{0}^{3T/4}\int_G h^2 \,dx\,dt\right].
			\end{aligned}
		\end{align*}
		Then, it follows that
		\begin{align}\label{energy2}
			\begin{aligned}
				&\,\mathbb{E}|\phi(0)|^2_{L^2(G)}+\mathbb{E}\int_0^{T/2}\int_G\overline{\theta}^2\overline{\varphi}^3\phi^2  \,dx\,dt+\mathbb{E}\int_0^{T/2}\int_G\overline{\theta}^2\overline{\varphi} |\nabla\phi|^2  \,dx\,dt\\
				&\leq C \left[\lambda^3\mathbb{E}\int_Q\theta^2\varphi^3 \phi^2\,dx\,dt+\lambda\mathbb{E}\int_Q \theta^2\varphi|\nabla \phi|^2\,dx\,dt +  \mathbb{E}\int_{0}^{3T/4}\int_G h^2 \,dx\,dt \right].
			\end{aligned}
		\end{align}
		Note that $\theta = \overline{\theta}$ and $\varphi = \overline{\varphi}$ on $(T/2, T)$. Therefore, we have that \begin{align}\label{Eq5.18}
			\begin{aligned}
				&\,\mathbb{E}\int_{T/2}^T\int_G\overline{\theta}^2\overline{\varphi}^3\phi^2  \,dx\,dt+\mathbb{E}\int_{T/2}^T\int_G\overline{\theta}^2\overline{\varphi} |\nabla\phi|^2  \,dx\,dt\\
				&\leq C \left[\lambda^3\mathbb{E}\int_Q\theta^2\varphi^3 \phi^2\,dx\,dt+\lambda\mathbb{E}\int_Q \theta^2\varphi|\nabla \phi|^2\,dx\,dt\right].
			\end{aligned}
		\end{align}
		From \eqref{energy2} and \eqref{Eq5.18}, we get
		\begin{align}\label{Ineq1}
			\begin{aligned}
				&\,\mathbb{E}|\phi(0)|^2_{L^2(G)}+\mathbb{E}\int_{Q}\overline{\theta}^2\overline{\varphi}^3\phi^2  \,dx\,dt+\mathbb{E}\int_{Q}\overline{\theta}^2\overline{\varphi} |\nabla\phi|^2  \,dx\,dt\\
				&\leq C \left[\lambda^3\mathbb{E}\int_Q\theta^2\varphi^3 \phi^2\,dx\,dt+\lambda\mathbb{E}\int_Q \theta^2\varphi|\nabla \phi|^2\,dx\,dt +  \mathbb{E}\int_{0}^{3T/4}\int_G h^2 \,dx\,dt \right].
			\end{aligned}
		\end{align}
		Using the system satisfied by $h$, we can easily deduce that
		\begin{align}\label{Ineq2}
			\mathbb{E}\int_{0}^{3T/4}\int_G h^2 \,dx\,dt \leq C\left(\frac{\alpha_1^2}{\beta_1^2}+ \frac{\alpha_2^2}{\beta_2^2}\right)\,\mathbb{E}\int_{Q}\overline{\theta}^2\overline{\varphi}^3\phi^2  \,dx\,dt.
		\end{align}
		Combining \eqref{Ineq1} and \eqref{Ineq2} and choosing a large enough $\beta_1$ and  $\beta_2$,  we end up with
		\begin{align*} &\,\mathbb{E}|\phi(0)|^2_{L^2(G)}+\mathbb{E}\int_{Q}\overline{\theta}^2\overline{\varphi}^3\phi^2  \,dx\,dt+\mathbb{E}\int_{Q}\overline{\theta}^2\overline{\varphi} |\nabla\phi|^2  \,dx\,dt\\
			&\leq C \left[\lambda^3\mathbb{E}\int_Q\theta^2\varphi^3 \phi^2\,dx\,dt+\lambda\mathbb{E}\int_Q \theta^2\varphi|\nabla \phi|^2\,dx\,dt\right].
		\end{align*}
		It follows that
		\begin{align}\label{findIneq}
			\begin{aligned}
				&\,\mathbb{E}|\phi(0)|^2_{L^2(G)}+\mathbb{E}\int_{Q}\overline{\theta}^2\overline{\varphi}^3\phi^2  \,dx\,dt+\mathbb{E}\int_{Q}\overline{\theta}^2\overline{\varphi} |\nabla\phi|^2  \,dx\,dt\\
				&+\mathbb{E}\int_{Q} \theta^2 \varphi^3 h^2  \,dx\,dt+\mathbb{E}\int_{Q} \theta^2 \varphi |\nabla h|^2  \,dx\,dt\\
				&\leq C \bigg[\lambda^3\mathbb{E}\int_Q\theta^2\varphi^3 \phi^2\,dx\,dt+\lambda\mathbb{E}\int_Q \theta^2\varphi|\nabla \phi|^2\,dx\,dt\\
				&\qquad\;+\lambda^3\mathbb{E}\int_{Q} \theta^2 \varphi^3 h^2  \,dx\,dt+\lambda\mathbb{E}\int_{Q} \theta^2 \varphi |\nabla h|^2  \,dx\,dt\bigg].
			\end{aligned}
		\end{align}
		Finally, this with \eqref{Carlem5.9} provide the desired Carleman estimate, and  this completes the proof.
	\end{proof}
	\section{Null controllability result}\label{section4} 
	This section is devoted to proving the main null controllability result given in Theorem \ref{th4.1SN}. To achieve this, we shall first establish the following observability inequality.
	\begin{prop}\label{Pro4.2}
		Assuming that \eqref{Assump10} holds  and $\beta_i>0$, $i=1,2$,  are sufficiently large, then there exist a constant $C>0$ and a positive weight function $\rho=\rho(t)$ blowing up at $t=T$, such that, for any $\phi_T\in L^2_{\mathcal{F}_T}(\Omega;L^2(G))$, the solution $((\phi,\Phi), \psi^1, \psi^2)$ of \eqref{ADJSO1} satisfies 
		\begin{align}\label{observaineq}
			\begin{aligned}
				&\,\mathbb{E}|\phi(0)|^2_{L^2(G)}+ \sum_{i=1}^{2} \mathbb{E}\int_Q\rho^{-2}|\psi^i|^2\,dx\,dt\leq C \left[ \mathbb{E}\int_{Q_0}  \phi^2 \,\,dx\,dt+\mathbb{E}\int_{Q}\Phi^2\,dx\,dt\right].
			\end{aligned}
		\end{align}
	\end{prop}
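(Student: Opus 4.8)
The plan is to read off Proposition \ref{Pro4.2} from the improved Carleman estimate \eqref{improvedCarl} in two moves: first freeze the Carleman parameters so that \eqref{improvedCarl} becomes a global weighted estimate for $\phi$ with a right-hand side of the required form, and then transfer this bound to each $\psi^i$ through a direct weighted energy estimate on the forward equations of \eqref{ADJSO1}.

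\textbf{Step 1.} Fix $\mu=\mu_3$ and choose any admissible $\lambda=\lambda_4$, so that all the weights are now fixed functions of $(t,x)$. As $t\to0^+$ or $t\to T^-$ one has $\alpha(t,x)\to-\infty$ while $\varphi(t,x)$ only grows like $\big(t(T-t)\big)^{-1}$, so the exponential factor dominates and $\theta^2\varphi^7$, $\theta^2\varphi^5$ are bounded on $\overline{Q}$; hence the right-hand side of \eqref{improvedCarl} is bounded by $C\big[\mathbb{E}\int_{Q_0}\phi^2\,dx\,dt+\mathbb{E}\int_Q\Phi^2\,dx\,dt\big]$. Dropping all but the first two terms on the left of \eqref{improvedCarl} yields
\[
\mathbb{E}|\phi(0)|^2_{L^2(G)}+\mathbb{E}\int_Q\overline{\theta}^2\overline{\varphi}^3\phi^2\,dx\,dt\le C\Big[\mathbb{E}\int_{Q_0}\phi^2\,dx\,dt+\mathbb{E}\int_Q\Phi^2\,dx\,dt\Big],
\]
which in particular settles the $\mathbb{E}|\phi(0)|^2_{L^2(G)}$ term of \eqref{observaineq}.

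\textbf{Step 2.} With the notation of \eqref{rec1}--\eqref{rec2}, set $\rho(t)=(\overline{\varphi}^{\star}(t))^{-3/2}\,e^{-\lambda\overline{\alpha}^{\star}(t)}$. Since $\ell$ is constant on $[0,T/2]$, $\rho$ is constant there; on $[T/2,T]$, upon possibly enlarging $\lambda$, one checks that $\rho$ is $C^1$, nondecreasing, and $\rho(t)\to+\infty$ as $t\to T^-$ (the exponential beats the $\ell(t)^{3/2}$ factor). Since $\overline{\varphi}^{\star}\le e^{\mu_3|\eta|_\infty}\overline{\varphi}$ and $\overline{\alpha}^{\star}\le\overline{\alpha}$, we also get the pointwise bound $\rho^{-2}(t)=(\overline{\varphi}^{\star})^{3}e^{2\lambda\overline{\alpha}^{\star}}\le C\,\overline{\theta}^2(t,x)\overline{\varphi}^3(t,x)$ on $Q$. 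Now fix $i\in\{1,2\}$ and write $\psi=\psi^i$, which solves the forward equation in \eqref{ADJSO1} with $\psi(0)=0$ and source $\beta_i^{-1}\mathbbm{1}_{\mathcal{O}_i}\phi$. Applying Itô's formula to $\rho^{-2}\psi^2$, integrating over $G$ (using the homogeneous Dirichlet condition) and over $(0,t)$, and taking expectation, the martingale part vanishes, the term $-2\rho^{-3}\rho'\psi^2$ is $\le0$ because $\rho$ is nondecreasing, and no spatial derivative of the weight appears since $\rho=\rho(t)$; absorbing the Itô correction $\rho^{-2}(a_2\psi+B_2\cdot\nabla\psi)^2$ into $2\rho^{-2}|\nabla\psi|^2$ via the stochastic parabolicity built into the well-posedness of \eqref{ADJSO1}, and using the boundedness of $a_1,a_2,B_1,B_2$ together with Young's inequality, we reach
\[
\mathbb{E}\int_G\rho^{-2}(t)\psi^2(t)\,dx+c\,\mathbb{E}\int_0^t\!\!\int_G\rho^{-2}|\nabla\psi|^2\,dx\,ds\le C\,\mathbb{E}\int_0^t\!\!\int_G\rho^{-2}\psi^2\,dx\,ds+C\beta_i^{-2}\,\mathbb{E}\int_0^t\!\!\int_G\rho^{-2}\mathbbm{1}_{\mathcal{O}_i}\phi^2\,dx\,ds.
\]
Gronwall's inequality in $t$ followed by one more integration in $t$ gives $\mathbb{E}\int_Q\rho^{-2}|\psi^i|^2\,dx\,dt\le C\beta_i^{-2}\,\mathbb{E}\int_Q\rho^{-2}\mathbbm{1}_{\mathcal{O}_i}\phi^2\,dx\,dt\le C\,\mathbb{E}\int_Q\overline{\theta}^2\overline{\varphi}^3\phi^2\,dx\,dt$, by the domination of $\rho^{-2}$. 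Summing over $i=1,2$ and inserting Step 1 yields $\sum_i\mathbb{E}\int_Q\rho^{-2}|\psi^i|^2\,dx\,dt\le C\big[\mathbb{E}\int_{Q_0}\phi^2\,dx\,dt+\mathbb{E}\int_Q\Phi^2\,dx\,dt\big]$; together with Step 1 this is exactly \eqref{observaineq}.

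\textbf{Main obstacle.} The essential difficulty is not the estimate itself --- it is in essence a repackaging of \eqref{improvedCarl} --- but the compatible construction of $\rho$: it must be nondecreasing so that $(\rho^{-2})'$ carries the favorable sign in the energy identity for $\psi^i$ (note that $|\frac{d}{dt}\log\rho|$ blows up as $t\to T$, so the wrong sign would destroy Gronwall's argument), while at the same time $\rho^{-2}$ must be dominated by the Carleman weight $\overline{\theta}^2\overline{\varphi}^3$ appearing on the left of \eqref{improvedCarl}, so that the source term $\beta_i^{-2}\mathbb{E}\int_Q\rho^{-2}\mathbbm{1}_{\mathcal{O}_i}\phi^2$ is genuinely controlled by the right-hand side of \eqref{observaineq}. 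Reconciling these two constraints is what pins down the precise form of $\rho$ in terms of the $\ell(t)$-based weights; the only other point of care --- absorbing the first-order stochastic drift $B_2\cdot\nabla\psi^i$ --- relies on the same stochastic-parabolic structure that underpins the well-posedness of \eqref{ADJSO1} and is otherwise routine.
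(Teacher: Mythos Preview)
Your proof is correct and follows the paper's route exactly: freeze the Carleman parameters, apply It\^o's formula to $\rho^{-2}(\psi^i)^2$, exploit the sign of $(\rho^{-2})'$ together with Gronwall to bound $\mathbb{E}\int_Q\rho^{-2}|\psi^i|^2$ by $\mathbb{E}\int_Q\rho^{-2}\phi^2$, dominate $\rho^{-2}$ by $\overline{\theta}^2\overline{\varphi}^3$, and conclude via \eqref{improvedCarl}. The only cosmetic difference is that the paper takes $\rho=e^{-\lambda\overline{\alpha}^{\star}}$, which is automatically nondecreasing and spares you the extra $(\overline{\varphi}^{\star})^{-3/2}$ factor and the attendant enlargement of $\lambda$.
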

	\begin{proof} 
		For fixed $\mu=\mu_3$ and $\lambda=\lambda_4$ (with $\mu_3$ and $\lambda_4$ given in Theorem \ref{lem4.5st}), we choose the weight function $\rho=\rho(t)=e^{-\lambda\overline{\alpha}^{\star}(t)}$.
		Notice that
		\begin{equation}\label{ito1}
			d(\rho^{-2} (\psi^i)^2)= -2\rho_t\rho^{-3} (\psi^i)^2dt+ \rho^{-2} \left[2\psi^i d \psi^i+ (d\psi^i)^2  \right],\qquad i=1,2.
		\end{equation}
		From \eqref{ito1}, we observe that the solution $((\phi,\Phi), \psi^1, \psi^2)$ of \eqref{ADJSO1} satisfies  for any $t \in (0,T)$
		\begin{align*}
			\begin{aligned}
				\mathbb{E}\int_0^t\int_G d(\rho^{-2} (\psi^i)^2)\, dx&\leq-2\mathbb{E}\int_0^t\int_G\rho_t\rho^{-3} (\psi^i)^2\, dx \,ds-2\mathbb{E}\int_0^t\int_G\rho^{-2} |\nabla\psi^i|^2\, dx \,ds\\
				&\quad\,+ 2\mathbb{E}\int_0^t\int_G\rho^{-2}\left[a_1(\psi^i)^2+\psi^i B_1\cdot\nabla\psi^i+\frac{1}{\beta_i}\mathbbm{1}_{\mathcal{O}_i}(x)\psi^i\phi\right]\, dx \,ds\\
				&\quad\,+ \mathbb{E}\int_0^t\int_G\rho^{-2} (a_2\psi^i+B_2\cdot\nabla\psi^i)^2\, dx \,ds, \qquad i=1,2.
			\end{aligned}
		\end{align*}
		By Young's inequality, it follows that
		\begin{align*}
			\mathbb{E}\int_0^t\int_G d(\rho^{-2} (\psi^i)^2)\, dx&\leq C\Bigg[\mathbb{E}\int_0^t\int_G \rho^{-2} (\psi^i)^2\, dx \,ds+ \mathbb{E}\int_{Q}\rho^{-2} \phi^2\, dx \,dt\Bigg], \qquad i=1,2.
		\end{align*}
		Using that $\psi^i(0)=0$ and the Gronwall inequality, we obtain
		\begin{align}\label{ineqwithrho}
			\mathbb{E}\int_Q\rho^{-2}|\psi^i|^2\,dx\,dt\leq C\,\mathbb{E}\int_{Q}\rho^{-2} \phi^2\, dx \,dt,\qquad i=1,2.
		\end{align}
		From \eqref{ineqwithrho}, \eqref{rec1} and \eqref{rec2}, we get that
		\begin{align*}
			\sum_{i=1}^{2} \mathbb{E}\int_Q\rho^{-2}|\psi^i|^2\,dx\,dt\leq C\,\mathbb{E}\int_{Q}\overline{\theta}^2\overline{\varphi}^3 \phi^2\, dx \,dt.
		\end{align*}
		By Carleman estimate \eqref{improvedCarl}, we deduce that
		\begin{align*}
			\mathbb{E}|\phi(0)|^2_{L^2(G)}+ \sum_{i=1}^{2} \mathbb{E}\int_Q\rho^{-2}|\psi^i|^2\,dx\,dt \leq C\left[ \mathbb{E}\int_{Q_0}  \phi^2 \, \,dx\,dt+\mathbb{E}\int_{Q}\Phi^2\,dx\,dt\right].
		\end{align*}
		This concludes the proof of Proposition \ref{Pro4.2}.
	\end{proof} 
	Let us now establish our null controllability result for the optimality system \eqref{eqq4.7} and also deduce the proof of Theorem \ref{th4.1SN}.
	\begin{prop}\label{Lm5.5}
		Let  $\rho=\rho(t)$ be the  weight function given in Proposition \ref{Pro4.2}. Then, for any target functions $y_{i,d} \in \mathcal{H}_{i,d}$, $i=1,2,$ satisfying \eqref{inqAss11SN} and  $y_0 \in L^2_{\mathcal{F}_0}(\Omega;L^2(G))$, there exists controls 
		$$(\widehat{u}_1, \widehat{u}_2) \in \mathcal{U},$$ 
		with minimal norm such that the corresponding solution of \eqref{eqq4.7} satisfies that
		$$\widehat{y}(T,\cdot) =0\;\;\textnormal{in}\;\;G,\quad\mathbb{P}\textnormal{-a.s.}$$
		Furthermore, the controls $(\widehat{u}_1,\widehat{u}_2)$ fulfill that
		\begin{align}\label{costofleaders}
			\begin{aligned}
				&\,|\widehat{u}_1|^2_{ L^2_\mathcal{F}(0,T;L^2(\mathcal{O}))}+|\widehat{u}_2|^2_{L^2_\mathcal{F}(0,T;L^2(G))}\\
				&\leq C\left[\mathbb{E}|y_0|^2_{L^2(G)}+ \sum_{i=1}^{2}\alpha^2_i \mathbb{E}\iint_{(0,T)\times \mathcal{O}_{i,d}} \rho^2 y^2_{i,d} \,dx \,dt\right],
			\end{aligned}
		\end{align}
		where the positive constant $C$ depending on $G$, $\mathcal{O}_i$ $\mathcal{O}_{i,d}$, $T$, $a_1$, $a_2$, $B_1$ and $B_2$.
	\end{prop}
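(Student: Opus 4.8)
The plan is to use the classical penalized Hilbert Uniqueness Method (HUM / duality) adapted to the stochastic forward-backward optimality system \eqref{eqq4.7}, with the observability inequality \eqref{observaineq} from Proposition \ref{Pro4.2} as the key ingredient. First I would fix $\varepsilon>0$ and introduce a penalized functional on the space of terminal data $\phi_T\in L^2_{\mathcal{F}_T}(\Omega;L^2(G))$,
\begin{align*}
	\mathcal{J}_\varepsilon(\phi_T)=\frac12\,\mathbb{E}\int_{Q_0}\phi^2\,dx\,dt+\frac12\,\mathbb{E}\int_Q\Phi^2\,dx\,dt+\varepsilon\,\big(\mathbb{E}|\phi_T|^2_{L^2(G)}\big)^{1/2}+\mathbb{E}|\phi(0)|^2_{L^2(G)}\langle\text{data}\rangle,
\end{align*}
more precisely $\mathcal{J}_\varepsilon(\phi_T)=\frac12\mathbb{E}\int_{Q_0}\phi^2+\frac12\mathbb{E}\int_Q\Phi^2+\varepsilon\|\phi_T\|-\mathbb{E}\langle y_0,\phi(0)\rangle_{L^2(G)}-\sum_i\alpha_i\mathbb{E}\iint_{(0,T)\times\mathcal{O}_{i,d}}y_{i,d}\psi^i\,dx\,dt$, where $((\phi,\Phi),\psi^1,\psi^2)$ solves \eqref{ADJSO1} with terminal datum $\phi_T$. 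The crucial point is that the source-term pairing is controlled: by Cauchy–Schwarz and the assumption \eqref{inqAss11SN}, $|\sum_i\alpha_i\mathbb{E}\iint y_{i,d}\psi^i|\le\sum_i\alpha_i(\mathbb{E}\iint\rho^2 y_{i,d}^2)^{1/2}(\mathbb{E}\iint\rho^{-2}|\psi^i|^2)^{1/2}$, and the last factor is exactly what \eqref{observaineq} estimates. Together with the $\mathbb{E}|\phi(0)|^2$ term on the left of \eqref{observaineq}, this shows $\mathcal{J}_\varepsilon$ is continuous, strictly convex and coercive on $L^2_{\mathcal{F}_T}(\Omega;L^2(G))$, hence admits a unique minimizer $\widehat{\phi}_T^\varepsilon$.

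Next, writing the Euler–Lagrange equation for $\widehat{\phi}_T^\varepsilon$ and reading off the optimal adjoint state $(\widehat{\phi}^\varepsilon,\widehat{\Phi}^\varepsilon,\widehat{\psi}^{i,\varepsilon})$, I would define the controls $\widehat{u}_1^\varepsilon=\widehat{\phi}^\varepsilon\mathbbm{1}_{\mathcal{O}}$ and $\widehat{u}_2^\varepsilon=\widehat{\Phi}^\varepsilon$ (up to sign), and check — via Itô's formula pairing the solution $(\widehat{y}^\varepsilon,\widehat{z}^{i,\varepsilon},\widehat{Z}^{i,\varepsilon})$ of \eqref{eqq4.7} against the adjoint $((\widehat{\phi}^\varepsilon,\widehat{\Phi}^\varepsilon),\widehat{\psi}^{i,\varepsilon})$ — that $\mathbb{E}\langle\widehat{y}^\varepsilon(T),\phi_T\rangle=\varepsilon\|\widehat{\phi}_T^\varepsilon\|^{-1}\mathbb{E}\langle\widehat{\phi}_T^\varepsilon,\phi_T\rangle$ for all $\phi_T$, i.e. $\|\widehat{y}^\varepsilon(T)\|\le\varepsilon$. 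Evaluating $\mathcal{J}_\varepsilon$ at its minimizer and at $0$ yields the uniform bound
\begin{align*}
	\|\widehat{u}_1^\varepsilon\|^2_{L^2_\mathcal{F}(0,T;L^2(\mathcal{O}))}+\|\widehat{u}_2^\varepsilon\|^2_{L^2_\mathcal{F}(0,T;L^2(G))}\le C\left[\mathbb{E}|y_0|^2_{L^2(G)}+\sum_{i=1}^2\alpha_i^2\,\mathbb{E}\iint_{(0,T)\times\mathcal{O}_{i,d}}\rho^2 y_{i,d}^2\,dx\,dt\right],
\end{align*}
with $C$ independent of $\varepsilon$, which is exactly \eqref{costofleaders}. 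Then I would pass to the limit $\varepsilon\to0$: the controls converge weakly (along a subsequence) in $\mathcal{U}$ to some $(\widehat{u}_1,\widehat{u}_2)$, the states converge weakly in the natural energy space by the well-posedness estimate, the limit solves \eqref{eqq4.7}, and $\widehat{y}(T)=0$ in $L^2_{\mathcal{F}_T}(\Omega;L^2(G))$ by lower semicontinuity of the norm; the uniform bound is preserved in the limit. Minimality of $(\widehat{u}_1,\widehat{u}_2)$ for $J$ among admissible controls follows since the HUM construction produces precisely the minimal-norm control (the adjoint states built from $\widehat{\phi}_T$ are, up to the limit, the Lagrange multipliers characterizing the minimizer of $J$ over the affine constraint set).

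The main obstacle I anticipate is twofold and concerns the coupling. First, one must be careful that \eqref{ADJSO1} is genuinely the adjoint of \eqref{eqq4.7} under the chosen duality pairing — the backward-forward structure means the Itô integration-by-parts involves both the terminal condition $\phi(T)=\phi_T$ and the initial condition $\psi^i(0)=0$, and the cross terms $\frac{1}{\beta_i}\mathbbm{1}_{\mathcal{O}_i}\phi$ in \eqref{ADJSO1} must match the coupling $-\frac{1}{\beta_i}z^i\mathbbm{1}_{\mathcal{O}_i}$ in \eqref{eqq4.7}; getting every sign and boundary term right is delicate but routine. Second, and more substantively, the uniform coercivity of $\mathcal{J}_\varepsilon$ hinges entirely on the observability inequality \eqref{observaineq} holding with the \emph{same} weight $\rho$ appearing in the source hypothesis \eqref{inqAss11SN} — this is already guaranteed by Proposition \ref{Pro4.2}, so the real work has been front-loaded into the Carleman estimates of Section \ref{section3}. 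With those in hand, the remaining argument is the standard duality machinery, and the deduction of Theorem \ref{th4.1SN} is then immediate: combine Proposition \ref{propp4.1} (existence/uniqueness of the Nash equilibrium and its bound \eqref{propine4.1}) with the present proposition, noting that $\widehat{y}=y(y_0,\widehat{u}_1,\widehat{u}_2,v_1^\star,v_2^\star)$ where $v_i^\star=-\frac{1}{\beta_i}\widehat{z}^i|_{\mathcal{O}_i}$ is the associated Nash equilibrium.
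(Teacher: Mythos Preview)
Your proposal is correct and follows essentially the same penalized HUM/duality approach as the paper: introduce the penalized functional on terminal data, use the observability inequality \eqref{observaineq} for coercivity (bounding both $\mathbb{E}\langle y_0,\phi(0)\rangle$ and the $y_{i,d}$-pairing exactly as you indicate), read off the controls from the Euler--Lagrange equation of the minimizer, obtain the uniform bound, and pass to the weak limit as $\varepsilon\to0$. The only cosmetic differences are the sign convention in the linear part of $\mathcal{J}_\varepsilon$ (the paper uses $+\mathbb{E}\langle y_0,\phi(0)\rangle+\sum_i\alpha_i\mathbb{E}\iint y_{i,d}\psi^i$, which leads directly to $(u_1^\varepsilon,u_2^\varepsilon)=(\phi^\varepsilon,\Phi^\varepsilon)$ without a sign flip) and that the paper derives the uniform bound by testing the Euler--Lagrange equation at $\phi_T=\phi_T^\varepsilon$ rather than comparing $\mathcal{J}_\varepsilon$ at the minimizer and at $0$; both routes are equivalent.
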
 
	\begin{proof}
		Multiplying the solution $(y, \mathcal{Z}^1, \mathcal{Z}^2)$ of \eqref{eqq4.7} by the solution $((\phi,\Phi), \psi^1, \psi^2)$ of \eqref{ADJSO1}, where $\mathcal{Z}^i = (z^i, Z^i)$, $i=1,2$, and then taking the expectation and integrating by parts, we obtain that
		\begin{align}\label{dual}
			\begin{aligned}
				&\,\mathbb{E}\left\langle y(T),\phi_T\right\rangle_{L^2(G)}-\mathbb{E}\left\langle y_0,\phi(0)\right\rangle_{L^2(G)} \\
				&=\mathbb{E}\int_{Q_0} u_1\phi \, dx \,dt +\mathbb{E}\int_{Q} u_2\Phi \, dx \,dt +\sum_{i=1}^{2}\alpha_i \mathbb{E}\iint_{ (0,T)\times \mathcal{O}_{i,d}} y_{i,d} \psi^i dx \,dt.
			\end{aligned}
		\end{align}
		Notice that the null controllability property is equivalent to finding a control $(u_1, u_2)$ for each $y_0 \in L^2_{\mathcal{F}0}(\Omega; L^2(G))$, such that for any $\phi_T \in L^2_{\mathcal{F}_T}(\Omega; L^2(G))$, the following holds
		\begin{align*}
			\begin{aligned}
				\mathbb{E}\left\langle y_0,\phi(0)\right\rangle_{L^2(G)}+\mathbb{E}\int_{Q_0} u_1\phi \, dx \,dt +\mathbb{E}\int_{Q} u_2\Phi \, dx \,dt +\sum_{i=1}^{2}\alpha_i \mathbb{E}\iint_{ (0,T)\times \mathcal{O}_{i,d}} y_{i,d} \psi^i dx \,dt=0.
			\end{aligned}
		\end{align*}
		Let $\varepsilon > 0$ and $\phi_T \in L^2_{\mathcal{F}_T}(\Omega; L^2(G))$, and consider the following functional
		\begin{align*}
			J_{\varepsilon}(\phi_T)&= \frac{1}{2}\mathbb{E}\int_{Q_0}\phi^2 \,dx\,dt +\frac{1}{2}\mathbb{E}\int_{Q}\Phi^2 \,dx\,dt +\varepsilon\mathbb{E}|\phi_T|_{L^2(G)} \\
			&\quad+\mathbb{E}\left\langle y_0,\phi(0)\right\rangle_{L^2(G)}+ \sum_{i=1}^{2}\alpha_i \mathbb{E}\iint_{(0,T)\times \mathcal{O}_{i,d}} y_{i,d} \psi^i \,dx \,dt.
		\end{align*}
		It is not difficult to see that $J_{\varepsilon}: L^2_{\mathcal{F}_T}(\Omega;L^2(G))\longrightarrow\mathbb{R}$ is continuous and strictly convex. On the other hand, we have that for any $\delta>0$
		\begin{align*}
			\mathbb{E}\left\langle y_0,\phi(0)\right\rangle_{L^2(G)}&\geq -\frac{1}{2\delta}\mathbb{E}|y_0|^2_{L^2(G)} - \frac{\delta}{2}\mathbb{E}|\phi(0)|^2_{L^2(G)}.
		\end{align*}
		By the observability inequality \eqref{observaineq}, we get that
		\begin{align}\label{inneq1}
			\begin{aligned}
				\mathbb{E}\left\langle y_0,\phi(0)\right\rangle_{L^2(G)}&\geq  - \frac{\delta}{2}C\left[ \mathbb{E}\int_{Q_0}  \phi^2 \, \,dx\,dt+\mathbb{E}\int_{Q}\Phi^2\,dx\,dt\right]\\
				&\quad\,+ \frac{\delta}{2}\,\sum_{i=1}^2\mathbb{E}\int_{Q}\rho^{-2}|\psi^i|^2  \,dx\,dt-\frac{1}{2\delta }\mathbb{E}|y_0|^2_{L^2(G)}.
			\end{aligned}
		\end{align}
		We also have that
		\begin{align}\label{inneq2}
			\begin{aligned}
				\sum_{i=1}^{2}\alpha_i \mathbb{E}\iint_{(0,T)\times \mathcal{O}_{i,d}} y_{i,d} \psi^i \,dx \,dt&\geq -\frac{1}{2\delta}\sum_{i=1}^{2}\alpha_i^2 \mathbb{E}\iint_{(0,T)\times \mathcal{O}_{i,d}} \rho^2|y_{i,d}|^2 \,dx \,dt\\
				&\quad\,-  \frac{\delta}{2}\sum_{i=1}^{2} \mathbb{E}\int_{Q} \rho^{-2} |\psi^i|^2 \,dx \,dt.
			\end{aligned}
		\end{align}
		By combining \eqref{inneq1} and \eqref{inneq2}, and taking $\delta=\frac{1}{2C}$ (where $C$ is the same constant as in \eqref{inneq1}), we obtain
		\begin{align*}
			J_{\varepsilon}(\phi_T)\geq&\,\frac{1}{4}\left[ \mathbb{E}\int_{Q_0}  \phi^2 \, \,dx\,dt+\mathbb{E}\int_{Q}\Phi^2\,dx\,dt\right] +\varepsilon\mathbb{E}|\phi_T|_{L^2(G)} \\
			&- C\left[ \mathbb{E}|y_0|^2_{L^2(G)}  
			+ \sum_{i=1}^{2}\alpha^2_i \mathbb{E}\iint_{(0,T)\times \mathcal{O}_{i,d}} \rho^2|y_{i,d}|^2 dx \,dt\right].
		\end{align*}
		Therefore, $J_{\varepsilon}$ is coercive, and thus $J_{\varepsilon}$ admits a unique minimum $\phi^\varepsilon_T$. If $\phi^\varepsilon_T\neq0$, then we conclude that
		\begin{equation*}
			\langle J_\varepsilon'(\phi^\varepsilon_T),\phi_T\rangle_{L^2_{\mathcal{F}_T}(\Omega;L^2(G))}=0\quad \,\,\textnormal{for all }\quad\phi_T\in L^2_{\mathcal{F}_T}(\Omega;L^2(G)).
		\end{equation*}
		It follows that for all $\phi_T\in L^2_{\mathcal{F}_T}(\Omega;L^2(G))$
		\begin{align}\label{Eq51NS}
			\begin{aligned}
				&\,\mathbb{E}\int_{Q_0}\phi^{\varepsilon}\,\phi \,dx\,dt +\mathbb{E}\int_{Q}\Phi^{\varepsilon}\,\Phi \,dx\,dt+\varepsilon\frac{\mathbb{E}\left\langle \phi^\varepsilon_T,\phi_T\right\rangle_{L^2(G)}}{|\phi^\varepsilon_T|_{L^2_{\mathcal{F}_T}(\Omega;L^2(G))}}\\
				&+\mathbb{E}\left\langle y_0,\phi(0)\right\rangle_{L^2(G)}+ \sum_{i=1}^{2}\alpha_i \mathbb{E}\iint_{{(0,T)\times \mathcal{O}_{i,d}}} y_{i,d} \psi^i dx \,dt=0.
			\end{aligned}
		\end{align}
		Taking controls $(u^{\varepsilon}_{1},u^{\varepsilon}_{2})=(\phi_{\varepsilon},\Phi_{\varepsilon})$ in \eqref{dual}, and combining the resulting equality with \eqref{Eq51NS}, we find that for all $\phi_T\in L^2_{\mathcal{F}_T}(\Omega;L^2(G))$
		\begin{equation*}
			\varepsilon\frac{\mathbb{E}\left\langle \phi^\varepsilon_T,\phi_T\right\rangle_{L^2(G)}}{|\phi^\varepsilon_T|_{L^2_{\mathcal{F}_T}(\Omega;L^2(G))}}+\mathbb{E}\left\langle y_\varepsilon(T),\phi_T\right\rangle_{L^2(G)}=0.
		\end{equation*}
		Hence, we deduce that 
		\begin{equation}\label{eq53NS}
			\left|y_\varepsilon(T)\right|_{L^2_{\mathcal{F}_T}(\Omega;L^2(G))}\leq \varepsilon.
		\end{equation}
		Choosing $(\phi,\Phi)=(\phi_{\varepsilon},\Phi_{\varepsilon})$ in \eqref{Eq51NS} and applying the observability inequality \eqref{observaineq} along with Young's inequality, we deduce that
		\begin{align}\label{eq54NS}
			\begin{aligned}
				&\,|u^\varepsilon_{1}|^2_{ L^2_\mathcal{F}(0,T;L^2(\mathcal{O}))}+|u^\varepsilon_{2}|^2_{L^2_\mathcal{F}(0,T;L^2(G))}\\
				&\leq C\left[\mathbb{E}|y_0|^2_{L^2(G)}+ \sum_{i=1}^{2}\alpha^2_i \mathbb{E}\iint_{(0,T)\times \mathcal{O}_{i,d}} \rho^2 y^2_{i,d} \,dx \,dt\right].
			\end{aligned}
		\end{align}
		If $\phi^\varepsilon_T=0$, we get that
		\begin{equation}\label{Ezu.1}
			\lim\limits_{t\rightarrow 0^{+}}\frac{J_{\varepsilon}(t \,\phi_T)}{t}\geq0\quad \,\,\textnormal{for all }\quad\phi_T \in L^2_{\mathcal{F}_T}(\Omega;L^2(G)).
		\end{equation}
		Using \eqref{Ezu.1} and setting $(u^{\varepsilon}_1,u^{\varepsilon}_2) = (0,0)$, we find that the inequalities \eqref{eq53NS} and \eqref{eq54NS} are satisfied.\\
		According to \eqref{eq54NS}, there exists a subsequence (also denoted by $(u^{\varepsilon}_1,u^{\varepsilon}_2)$) such that as $\varepsilon\rightarrow0$
		\begin{align}\label{weakconvr}
			\begin{aligned}
				u^{\varepsilon}_1\longrightarrow  \widehat{u}_1\quad  \text{weakly in} \,\,\; L^2((0,T)\times\Omega;L^2(\mathcal{O}));&\\
				u^{\varepsilon}_2\longrightarrow  \widehat{u}_2\quad  \text{weakly in} \,\,\; L^2((0,T)\times\Omega;L^2(G)).
			\end{aligned}
		\end{align}
		By \eqref{weakconvr}, it is easy to see that
		\begin{equation}\label{Eq56}
			y_\varepsilon(T)\longrightarrow  \widehat{y}(T)\quad  \text{weakly in} \;\,\,  L^2_{\mathcal{F}_T}(\Omega;L^2(G)),\quad \textnormal{as}\; \varepsilon\rightarrow0,
		\end{equation}
		where $\widehat{y}$ is the solution of \eqref{eqq4.7} associated to the controls $\widehat{u}_1$ and $\widehat{u}_2$. Combining \eqref{eq53NS} and \eqref{Eq56}, we finally conclude that $$\widehat{y}(T,\cdot)=0\;\;\textnormal{in}\;\;G,\quad\mathbb{P}\textnormal{-a.s.}$$
		From \eqref{eq54NS} and \eqref{weakconvr}, we easily obtain the desired estimate \eqref{costofleaders}. This completes the proof of Proposition \ref{Lm5.5} and establishes our null controllability result  as stated in Theorem \ref{th4.1SN}.
	\end{proof} 
	\section{Additional comments and some open problems}\label{section5}
	This paper is devoted to the application of Stackelberg and Nash strategies to stochastic heat equations with Dirichlet boundary conditions in a bounded domain. The problem is solved in two steps: first, we characterize the Nash equilibrium by means of a backward equation. Then, the whole problem is reduced to the question of null controllability for a forward-backward coupled stochastic system, which is solved using the Carleman estimation approach.
	
	Now, we discuss some open problems:
	\begin{itemize}
		\item As it is mentioned in \cite{oukBouElgMan}, the introduction of two leaders represents a technical constraint. This is a classical issue when dealing with the controllability of forward stochastic parabolic equations. See \cite{Preprintelgrou23, tang2009null} for more details. Therefore, it is challenging to extend the result of the present paper to a system without using the extra control $u_2$.
		
		In contrast to forward equations,  we can handle the backward ones with only one leader $u$ and two followers $v_1$ and $v_2$. More precisely, using the same ideas as presented in this paper, we can treat Stackelberg-Nash null controllability of the following system
		\begin{equation*}
			{\quad\quad\qquad\quad\begin{cases}
					\begin{array}{ll}
						dy +\Delta y \,dt = \left[a_1y+B\cdot\nabla y+a_2Y+u\mathbbm{1}_{\mathcal{O}}+v_1\mathbbm{1}_{\mathcal{O}_1}+v_2\mathbbm{1}_{\mathcal{O}_2}\right] \,dt + Y\,dW(t)&\textnormal{in}\,\,Q,\\
						y= 0&\textnormal{on}\,\,\Sigma,\\
						y(T)=y_T &\textnormal{in}\,\,G.
					\end{array}
			\end{cases}}
		\end{equation*}
		\item It would be interesting to study Stackelberg-Nash controllability for some coupled stochastic parabolic equations. For this, we consider the following coupled backward stochastic parabolic system with one leader $u$ and two followers $v_1$ and $v_2$
		\begin{equation*}
			{\quad\quad\;\;\quad\begin{cases}
					\begin{array}{ll}
						dy_1 +\Delta y_1 \,dt = \left[a_{11}\,y_1+a_{12}\,y_2+b_1\,Y_1+u\mathbbm{1}_{\mathcal{O}}+v_1\mathbbm{1}_{\mathcal{O}_1}+v_2\mathbbm{1}_{\mathcal{O}_2}\right] \,dt + Y_1\,dW(t)&\textnormal{in}\,\,Q,\\
						dy_2 +\Delta y_2 \,dt = \left[a_{21}\,y_1+a_{22}\,y_2+b_2\,Y_2\right] \,dt + Y_2\,dW(t)&\textnormal{in}\,\,Q,\\
						y_i= 0,\quad i=1,2&\textnormal{on}\,\,\Sigma,\\
						y_i(T)=y_i^T,\quad i=1,2 &\textnormal{in}\,\,G,
					\end{array}
			\end{cases}}
		\end{equation*}
		where all the coefficients $a_{ij}$ and $b_i$ are bounded. For some results concerning Stackelberg-Nash controllability of deterministic coupled parabolic PDEs, we refer, e.g., to \cite{HSP18, HSP16}. This question will be  analyzed in a forthcoming paper.
		\item Another interesting problem is to study Stackelberg-Nash controllability for the semilinear stochastic parabolic equations
		\begin{equation*}
			{\qquad\quad\;\,\begin{cases}
					\begin{array}{lll}
						dy - \Delta y \,dt \;\,=\left[F_1(y,\nabla y)+u_1\mathbbm{1}_{\mathcal{O}}+v_1\mathbbm{1}_{\mathcal{O}_1}+v_2\mathbbm{1}_{\mathcal{O}_2}\right] \,dt+\left[F_2(y,\nabla y) +u_2\right]\,dW(t)&\textnormal{in}\,\,Q,\\
						y=0 &\textnormal{on}\,\,\Sigma,\\
						y(0)=y_0 &\textnormal{in}\,\,G,
					\end{array}
			\end{cases}}
		\end{equation*}
		where $F_1$ and $F_2$ are locally Lipschitz-continuous functions.
		\item Let $\Gamma_1$ and $\Gamma_1$ 
		be two small parts of the boundary $\Gamma$. As in \cite{AFS}, it would be interesting to treat the case where one or more controls act on the boundary. For example, it is interesting to study the Stackelberg-Nash controllability of
		the  system
		\begin{equation*}
			{\qquad\quad\quad\begin{cases}
					\begin{array}{ll}
						dy - \Delta y \,dt=\left[a_1y+B_1\cdot\nabla y+u_1\mathbbm{1}_{\mathcal{O}}\right] \,dt 
						+\left[a_2y+B_2\cdot\nabla y +u_2\right]\,dW(t)&\textnormal{in}\,\,Q,\\
						y=v_1\mathbbm{1}_{\Gamma_1}+v_2\mathbbm{1}_{\Gamma_2} &\textnormal{on}\,\,\Sigma,\\
						y(0)=y_0 &\textnormal{in}\,\,G.\end{array}
			\end{cases}}
		\end{equation*}
		\item It is possible to introduce other strategies to control systems of the kind \eqref{eqq1.1}. One of them is the so-called Stackelberg-Pareto method. Following some ideas from \cite{BoMaOuNash2}, it would be quite interesting to study the controllability property of equation \eqref{eqq1.1} when we reverse the roles of the leaders and the followers.
		\item Following  \cite{BoManOukTime21}, it is of big interest  to deal with time optimal control problems for \eqref{eqq1.1}.
	\end{itemize}

\end{document}